\DeclareMathOperator*{\essinf}{ess-inf}
\DeclareMathOperator*{\rank}{rank}
\DeclareMathOperator*{\range}{range}
\DeclareMathOperator*{\Ker}{Ker}
\DeclareMathOperator*{\dom}{dom}
\newcommand{\ol}{\overline}
\DeclareMathOperator*{\Span}{span}
\newtheorem{thm}{Theorem}[section]
\newtheorem{lem}[thm]{Lemma}
\newtheorem{deff}[thm]{Definition}
\newtheorem{prop}[thm]{Proposition}
\newtheorem{cor}[thm]{Corollary}
\newenvironment{defn}{\begin{deff}
		\rm }{\end{deff}}
\newcommand{\mc}{\mathcal}
\renewcommand{\ss}{\subseteq}
\newcommand{\ra}{\rightarrow}
\newcommand{\msc}{\mathscr}
\newcommand{\lan}{\langle}
\newcommand{\ran}{\rangle}
\def \l2x{L^2(X;\mc H)}
\def \l2w{L^2(X;\mc H)}
\def \dx{{d_{\mu_{X}}(x)}}
\def \dm{{d_{\mu_{\mc M}}(s)}}
\def \sd{{S_{\mc D}(\msc A)}}
\def \domega{{d_{\mu_X}(x)}}
\def \ds{{d_{\mu_{\mc M}}(s)}}
\def \dgamma{{d_{\mu_{\Gamma}}(\gamma)}}
\newcommand{\blue}{\textcolor{blue}}
\theoremstyle{definition}
\theoremstyle{remark}
\numberwithin{equation}{section}
\newcommand\reallywidehat[1]{%
	\savestack{\tmpbox}{\stretchto{%
			\scaleto{%
				\scalerel*[\widthof{\ensuremath{#1}}]{\kern.1pt\mathchar"0362\kern.1pt}%
				{\rule{0ex}{\textheight}}
			}{\textheight}%
		}{2.4ex}}%
	\stackon[-6.9pt]{#1}{\tmpbox}%
}
\begin{document}
	\title[Dual frames   using  Infimum cosine angle]{ A Characterization of MG   Dual frames \\  using  Infimum cosine angle}

\author{Sudipta Sarkar}
\address{Department  of Mathematics,
	Indian Institute of Technology Indore,
	Simrol, Khandwa Road,
	Indore-453 552}
\email{sudipta.math7@gmail.com, nirajshukla@iiti.ac.in}

\thanks{Research of S. Sarkar and N. K. Shukla was supported by   research grant from CSIR, New Delhi [09/1022(0037)/2017-EMR-I] and NBHM-DAE [02011/19/2018-NBHM(R.P.)/R\&D II/14723], respectively.}

\author{Niraj K. Shukla}


\subjclass[2000]{42C40,42C15}

\keywords{Multiplication invariant space,   Angle-between subspaces, Oblique dual frames, Riesz basis, Translation invariant space}


\begin{abstract}
	 This article discusses the construction of  dual frames and their uniqueness for the multiplication generated frames on $L^2(X; \mathcal H)$, where $X$ is a $\sigma$-finite measure. A necessary and sufficient condition of such duals associated to infimum cosine angle  is obtained.    The result is illustrated for the translation-generated systems on a locally compact group (not necessarily abelian ) by action of its abelian subgroup.
	 \end{abstract}

\maketitle
\section{{\bf Introduction }}\label{S:Introduction}
 Let   $\mc H$ be a separable Hilbert space,   and $(X, \mu_X)$  be a  $\sigma$-finite   measure space so that $L^2(X)$ is separable.  The   vector-valued space  $L^2(X;\mathcal H)$ defined by
	$$
	L^2(X;\mathcal H)=\left\{ \text{measaurable} \ f : X \rightarrow \mathcal H \ | \   \|f \|^2=   \int_{X}\|f(x)\|_{\mc H}^2 \ \dx <\infty \right\},
	$$
	is a Hilbert space with the inner product  
	$\langle f, g \rangle=\int_{X} \langle f(x), g(x) \rangle\ \dx$,  for $f, g \in L^2(X;\mathcal H).$  We define a multiplication invariant space   on $L^2(X;\mathcal H)$.
	\begin{defn} \label{MIOpDefn}Let  $\msc V$ be a closed subspace of  $L^2(X; \mc H)$ and $\mc D \subset L^\infty(X)$.  The space $\msc V$ is said to be  \textit{multiplication invariant} (MI)   corresponding to $\mc D$ if  $M_\phi f \in \msc V$ for all $\phi \in \mc D$ and $f \in \msc V$, where   the \textit{multiplication operator} $M_\phi$ on  $L^2(X;\mathcal H)$ is defined as follows:
		$$
		(M_\phi f)(x)=\phi(x) f(x), \ a.e. \  x \in X, \ f \in  L^2(X;\mathcal H).	$$
	\end{defn}
The operator    $M_\phi$ is   bounded  and linear    satisfying $\|M_\phi\|=\|\phi\|_{L^\infty}$ when $X$  is a $\sigma$-finite measure space. 
	  
	Bownik and Ross in \cite{bownik2015structure} studied the structure of MI spaces in $L^2(X; \mc H)$ and classified them using range functions $J$, where $J$ is a mapping from $X$ to the collection of closed subspaces of $\mc H$.  Further,   Iverson in \cite{iverson2015subspaces}, and Bownik and  Iverson in \cite{BownikIversion2021} investigated the frame properties of multiplication generated systems associated with the Parseval determining set. 
	A set $\mc D=\{g_s: s \in \mc M\}$ in $L^{\infty}(X)$ is known as   \textit{Parseval determining set} for $L^1(X)$ if   for each $f \in L^1(X)$,   $s\mapsto \int_{X}f(x)\overline{g_s (x)}d_{\mu_{X}}(x)$  is measurable on $\mc M$,  and  
	\begin{align}\label{Parsevaldeter}
	\int_{\mc M}\mid\int_{X}f(x)\overline{g_s(x)}d_{\mu_{X}}(x)\mid ^2d_{\mu_{\mc M}}(s)=\int_{X} | f(x) |^2 d_{\mu_{X}}(x),
	\end{align}
	where   $(\mc M,\mu_{\mc M})$ is a $\sigma$-finite measure space. 	The  \textit{multiplication generated (MG)}  system corresponding  to the Parseval determining set $\mc D$ is   given by,
	$
	\mc E_{\mc  D}(\msc A) :=\left\{M_{\phi_s}f_i (\cdot)=\phi_s(\cdot)f_i (\cdot): s\in \mc M, i=1,2, \dots, r\right\},
	$ 
	for a collection of functions $\msc A =\{f_i\}_{i=1}^r$ in $L^2 (X; \mc H)$. 
We denote  $S_{\mc D}(\msc A) :=\ol{\mbox{span}} \, \mc E_{\mc D}(\msc A)$ and 
the associated range function  $J_{\msc A}(x):=\Span\{f_i(x): i=1,2, \dots,r\}$, for a.e. $x\in X$ \cite[Proposition 2.2 (iii)]{iverson2015subspaces}.

%
%
%
%
%
	In this chain of research our goal is to find characterization  results for  the MG  duals $\mc E_{\mc D}(\msc A')$ of a frame $\mc E_{\mc D}(\msc A)$   associated with the infimum cosine angles  between the closed subspaces  $\mc S_{\mc D}(\msc A')$ and $\mc S_{\mc D}(\msc A)$  of $L^2 (X; \mc H )$, for some finite  collection of functions $\msc A' $ in $L^2 (X; \mc H)$. 
	The  infimum cosine angle between two closed subspaces of Hilbert spaces \cite{aldroubi1998construction} is defined as  follows:
	\begin{defn}\label{D-InfCosine} Let $  V$ and $W$ be closed subspaces of $\mc H$. The
		\textit{infimum cosine angle} between $V$ and $ W$  of $\mc H$ is defined by
		$$R(V,  W)=\smashoperator{\inf_{v\in  V\backslash\{0\}}} \frac{\|P_{ W}v\|}{\|v\|},$$ where $P_W$ is the projection on $W$.
	\end{defn}
 In general,  $R(V,W)\neq R(W,V)$. If $R(V, W)>0$ and $R(W, V)>0$ then $R(V, W)=R(W,V)$,  and hence  we can decompose the Hilbert space $\mc H=V\, {\oplus} \, W^\perp$  (not necessary orthogonal direct sum), means, $\mc H= V+ W^\perp$ and $ V\bigcap  W^{\perp}=0$ \cite{christensen2004oblique}. In addition, if   the following reproducing formula holds : 
   $$f=\sum_{k\in I} \langle f, f_k\rangle g_k,  \ \forall f\in V,$$
    where $\{f_k\}_{k\in I}$ and $\{g_k\}_{k\in I}$ are Bessel sequences in $\mc H$ and    $W= \overline{\mbox{span}} \{f_k\}$, then  $\{f_k\}_{k\in I}$ is an oblique dual frame of $\{g_k\}_{k\in I}$ on $W$, and $\{g_k\}_{k\in I}$ is an oblique dual frame of $\{f_k\}_{k\in I}$ on $V$ \cite[Lemma 3.1]{christensen2004oblique}. Furthermore, $\{g_k\}_{k\in I}$ and $\{P_V f_k\}_{k\in I}$ are dual frames for $V$ and $\{f_k\}_{k\in I}$ and $\{P_W g_k\}_{k\in I}$ are dual frames for $W$. This decomposition is important to recover data from a given set of samples. Tang in  \cite{Tang2000obliqueprojection} studied the infimum cosine angles in connection with oblique projections that leads to oblique dual frames,   followed by  Kim et al. for the different contexts \cite{kim2001characterizations, kim2003quasi}. Further, Christensen and Eldar in \cite{christensen2004oblique}, and Kim et. al in \cite{kim2005infimum} developed a connection of the infimum cosine angle with oblique dual frames for shift-invariant (SI) spaces in $L^2(\mathbb R^n)$. An existence of Riesz basis using infimum cosine angle for the theory of multiresolution analysis in  $L^2(\mathbb R^n)$ was discussed by  Bownik and Garrig{\'o}s  in \cite{bownik2004biorthogonal}. We aim to continue the work in the context of set-theoretic abstraction.

Now we provide our first main result which is a measure theoretic abstraction of \cite[Theorem 4.10]{kim2005infimum} using range function.  The novelty of considering  the  approach on $L^2 (X; \mc H )$ is  to develop the theory of  duals for a continuous frame on  locally compact group (not necessarily abelian)  translated by its abelian subgroup.

\begin{thm}\label{Result-I}
	Let $(X, \mu_{X})$ and $(\mc M, \mu_{\mc M})$   be   $\sigma$-finite measure spaces such that $\mu (X)<\infty$,  and the set  $\mc D=\{\varphi_s\in L^\infty(X): s\in \mc M\}$ is  Parseval determining set for $L^1(X)$.   
	For the finite collection of functions ${\msc A}=\{f_i\}_{i=1}^m$ and ${\msc B}=\{g_i\}_{i=1}^n$ in $L^2(X; \mc H)$,  and for a.e. $x\in X$, assume the range functions  $J_{\msc A}(x)=\Span\{f_i(x): i=1,2, \dots,m \}$ and $J_{\msc B}(x)=\Span\{g_i(x): i=1,2, \dots,n \}$ associated with the  MI spaces $\mc S_{\mc D}(\msc A)$ and $\mc S_{\mc D}(\msc B)$, respectively. Then the following are equivalent:
	\begin{enumerate}
		\item[(i)]  	 There exist  ${\msc A'}=\{f_i'\}_{i=1}^r$ and ${\msc B'}=\{g_i'\}_{i=1}^r$ in $L^2(X; \mc H)$ such that  $\mc E_{\mc D}(\msc A')$ and  $\mc E_{\mc D}(\msc B')$ are continuous frames for $\mc S_{\mc D}(\msc A)$  and $\mc S_{\mc D}(\msc B)$, respectively, satisfying 
		the  following reproducing formulas for  $g\in \mc S_{\mc D}(\msc A)$ and $h \in \mc S_{\mc D}(\msc B)$:
		\begin{align}\label{GlobalOblique}
			g=\sum_{i=1}^r \int_{\mc M} \langle g, M_{\phi_s}g_i'\rangle M_{\phi_s}f_i' \  \ds,\,  \mbox{and}\   h=\sum_{i=1}^r  \int_{\mc M}\langle h, M_{\phi_s}f_i'\rangle M_{\phi_s}g_i'  \ \ds. 
		\end{align}
		\item[(ii)] The infimum cosine angles  of  $\mc S_{\mc D}(\msc A)$ and $\mc S_{\mc D}(\msc B)$ are greater than zero, i.e.,  $$
		R(\mc S_{\mc D}(\msc A),\mc S_{\mc D}(\msc B))>0 \ 
		\mbox{and} \  R(\mc S_{\mc D}(\msc B),\mc S_{\mc D}(\msc A))>0.$$
		\item[(iii)]  There exist collection of functions  $\{f_i'\}_{i=1}^r$ and $\{g_i'\}_{i=1}^r$ in $L^2(X; \mc H)$ such that for a.e. $x\in X$, the systems $\{f_i'(x)\}_{i=1}^r$ and  $\{g_i'(x)\}_{i=1}^r$ are finite frames for $J_{\msc A}(x)$  and $J_{\msc B}(x)$, respectively, satisfying 
		the  following reproducing formulas  for   $u\in J_{\msc A}(x)$ and   $v\in J_{\msc B}(x)$:  
		\begin{align}\label{LocalOblique}
			u=\sum_{i=1}^r \langle u, g_i'(x)\rangle f_i'(x), \ \mbox{and} \ v=\sum_{i=1}^r \langle v, f_i'(x)\rangle g_i'(x), \  a.e. \ x\in X.
		\end{align}
		
		\item[(iv)]  For a.e. $x\in X$, the infimum cosine angles of  $J_{\msc A}(x)$ and 
		$J_{\msc B}(x)$  are greater than zero, i.e.,
		$$R(J_{\msc A}(x),J_{\msc B}(x))>0 \ 
		\mbox{and} \  R(J_{\msc B}(x),J_{\msc A}(x))>0.$$
	\end{enumerate}
\end{thm}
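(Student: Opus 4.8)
The plan is to establish the four-way equivalence through the range-function/fiberization dictionary for MI spaces \cite{bownik2015structure}, combined with the Plancherel-type isometry supplied by the Parseval determining set $\mc D$. The computational backbone is the identity $\langle f, M_{\varphi_s} h\rangle = \int_X \langle f(x), h(x)\rangle_{\mc H}\,\overline{\varphi_s(x)}\,\dx$, which realizes $s\mapsto \langle f, M_{\varphi_s}h\rangle$ as the ``$\mc D$-transform'' of the scalar map $x\mapsto\langle f(x),h(x)\rangle_{\mc H}$; polarizing \eqref{Parsevaldeter} then converts integrals over $\mc M$ into integrals over $X$. I would close the cycle with the three links (i)$\Leftrightarrow$(iii), (iii)$\Leftrightarrow$(iv), and (iv)$\Leftrightarrow$(ii).

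For (i)$\Leftrightarrow$(iii): fiberize the analysis operator through the $\mc D$-transform to show that $\mc E_{\mc D}(\msc A')$ is a continuous frame for $\mc S_{\mc D}(\msc A)$ exactly when $\{f_i'(x)\}_{i=1}^r$ is a frame for $J_{\msc A}(x)$ for a.e.\ $x$, with bounds controlled by the essential bounds of the fiber frame operators; this is the continuous-frame analogue of the standard ``frames fiberize'' principle. For the reproducing formulas, test \eqref{GlobalOblique} weakly against an arbitrary $\tilde g\in\mc S_{\mc D}(\msc A)$, use the Plancherel identity to rewrite both sides as $\int_X\sum_{i=1}^r\langle g(x),g_i'(x)\rangle\langle f_i'(x),\tilde g(x)\rangle\,\dx$, and conclude via a density and measurable-selection argument that \eqref{GlobalOblique} is equivalent to \eqref{LocalOblique} holding for a.e.\ $x$ and all $u\in J_{\msc A}(x)$ (and symmetrically for $\msc B$).

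The link (iii)$\Leftrightarrow$(iv) is the pointwise heart: in a single fiber, two finite-dimensional subspaces $V=J_{\msc A}(x)$ and $W=J_{\msc B}(x)$ carry finite frames tied by the cross-reproducing formulas \eqref{LocalOblique} if and only if $R(V,W)>0$ and $R(W,V)>0$, by the cited finite oblique-dual lemma \cite[Lemma 3.1]{christensen2004oblique} (which forces $\dim V=\dim W$ and a nondegenerate cross-Gram operator). The nontrivial issue is to run this construction measurably in $x$, by inverting the measurably varying fiber cross-Gram matrices on the (a.e.)\ set where $R>0$ guarantees invertibility, thereby producing measurable selections $f_i'(x),g_i'(x)$. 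For (iv)$\Leftrightarrow$(ii) I would prove the lemma $R(\mc S_{\mc D}(\msc A),\mc S_{\mc D}(\msc B))=\essinf_{x\in X} R(J_{\msc A}(x),J_{\msc B}(x))$, using that the orthogonal projection onto an MI space fiberizes as $(P_{\mc S_{\mc D}(\msc B)}g)(x)=P_{J_{\msc B}(x)}g(x)$; the inequality ``$\ge$'' is immediate from integrating the fiber estimate, and ``$\le$'' follows by concentrating a measurable near-minimizing unit section on a small set of positive measure.

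The step I expect to be the main obstacle is the passage between the \emph{pointwise} fiber conditions (iii)--(iv) and the \emph{uniform} global conditions (i)--(ii). The angle lemma above makes (ii) equivalent to $\essinf_{x\in X}R(J_{\msc A}(x),J_{\msc B}(x))>0$, a \emph{uniform} lower bound, whereas (iv) only asserts positivity for a.e.\ $x$; similarly the continuous-frame bounds in (i) require the fiber frame bounds and the norm of the inverted cross-Gram defining the duals to be essentially bounded and bounded away from zero in $x$. So the crux is to verify that $L^2(X;\mc H)$-membership of $\msc A'$ and $\msc B'$, finite generation of the range functions, and $\mu(X)<\infty$ together upgrade a.e.\ positivity to these essential bounds. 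Establishing this uniformity, equivalently the positivity of the essential infimum of the fiber angles, is where I would concentrate the technical effort and where the argument is most delicate.
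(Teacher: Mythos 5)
Your architecture (a chain (i)$\Leftrightarrow$(iii)$\Leftrightarrow$(iv)$\Leftrightarrow$(ii)) is close in spirit to the paper's, which likewise rests on fiberizing the analysis operators through the $\mc D$-transform (Proposition \ref{BesselCharrecterisation}), on a pointwise oblique-projection argument in each fiber, and on the identity $R(\mc S_{\mc D}(\msc A),\mc S_{\mc D}(\msc B))=\essinf_{x}R(J_{\msc A}(x),J_{\msc B}(x))$ of (\ref{Pointwise-Inf}). The paper's route differs in that it proves (ii)$\Rightarrow$(i) constructively, normalizing the generators by $(G_{\msc A^\#}(x)^{\dagger})^{1/2}$ to obtain fiberwise tight frames and then invoking the Gramian dualization of Proposition \ref{Oblique-Dual-Construction} together with the rank-and-boundedness condition of Proposition \ref{ObliqueDecomp}, and proves (i)$\Rightarrow$(ii) by exhibiting the oblique projection $\Xi$ and identifying $\Ker\Xi=\mc S_{\mc D}(\msc B)^{\perp}$.

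The genuine gap is exactly the step you flag and then defer: passing from the a.e.\ pointwise conditions (iii)--(iv) to the uniform conditions (i)--(ii). By (\ref{Pointwise-Inf}), condition (ii) amounts to $\essinf_{x}R(J_{\msc A}(x),J_{\msc B}(x))>0$, whereas (iv) asserts only $R(J_{\msc A}(x),J_{\msc B}(x))>0$ a.e.; these are not the same, and the hypotheses you invoke ($\mu(X)<\infty$, $L^2$-membership of the generators, finite generation of the range functions) do not bridge them. Concretely, take $X=(0,1)$, $\mc H=\C^2$, $f_1(x)=e_1$ and $g_1(x)=\cos\theta(x)\,e_1+\sin\theta(x)\,e_2$ with $\theta(x)\uparrow\pi/2$ as $x\downarrow 0$: then $R(J_{\msc A}(x),J_{\msc B}(x))=R(J_{\msc B}(x),J_{\msc A}(x))=|\cos\theta(x)|>0$ for every $x$, so (iv) holds, yet the essential infimum is $0$; moreover any fiberwise dual pair forces $\|f_1'(x)\|\,\|g_1'(x)\|=1/|\cos\theta(x)|$, so the fiber frame bounds cannot be essentially bounded and no global continuous-frame dual as in (i) exists. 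Hence your steps (iv)$\Rightarrow$(ii) and (iii)$\Rightarrow$(i) would fail as described, and no measurable-selection device repairs them: what is missing is a uniformity condition (essential boundedness of the inverted cross-Gramians, i.e.\ the hypothesis appearing in Proposition \ref{ObliqueDecomp}(i)), not a technique. For what it is worth, the paper's own write-up shares this soft spot --- it proves (ii)$\Rightarrow$(iv) and (in the passage labelled (iv)$\Rightarrow$(iii)) actually proves (iii)$\Rightarrow$(iv), never an implication out of (iv) --- so your instinct about where the difficulty sits is correct; but as written your proposal does not constitute a proof of the stated equivalence.
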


The equations (\ref{GlobalOblique}) and (\ref{LocalOblique}) explore the various possibilities of obtaining oblique dual frames in the global and local setups, respectively. These duals and associated reproducing formulas are not necessarily unique.  But when $\mc E_{\mc D}(\msc A)$ is a Riesz basis then the dual is always unique. 
 The following main result discusses the uniqueness of reproducing formula, which is a measure-theoretic abstraction of \cite[Corrollary 2.4]{Tang2000obliqueprojection} and \cite[Proposition 2.13]{bownik2004biorthogonal}. 
\begin{thm}\label{Result-II} 
	
	Let $(X, \mu_{X})$   be   a $\sigma$-finite measure space with  $\mu (X)<\infty$, and    let  $\msc V$ and $\msc W$ be   multiplication invariant subspaces of $L^2(X;\mc H)$ corresponding to an orthonormal basis   $\msc D$ of $L^2(X)$. For the finite collection of functions ${\msc A}=\{f_i\}_{i=1}^r$,  assume $\mc E_{\msc D} (\msc A)$ is a Riesz basis for $\msc V$.
	Then the following holds:
	\begin{enumerate}
		\item[(i)] \textbf{Global setup:} If  there exists  ${\msc A'}=\{f'_i\}_{i=1}^r$  in $L^2(X; \mc H)$ such that $\mc E_{\msc D} (\msc A')$ is a Riesz basis for $\msc W$ satisfying the  following biorthogonality  condition 
		\begin{align}\label{biorthogonal}
			\langle M_{\phi} f_i, M_{\phi'}f'_{i'} \rangle=\delta_{i, i'}\delta_{\phi, \phi'}, \quad i, i'=1, 2, \cdots, r; \  \phi, \phi'\in \msc D, 
		\end{align}
		then 				the infimum cosine angles  of $\msc V$ and $\msc W$ are greater than zero, i.e.,  
		\begin{align}\label{Infimum}
			R(\msc V,\msc W)>0 \ 
			\mbox{and} \  R(\msc W, \msc V)>0. 
		\end{align}
		Conversely	if  (\ref{Infimum})  holds true, then  there exists   ${\msc A'}=\{f'_i\}_{i=1}^r$  in $L^2(X; \mc H)$ such that $\mc E_{\msc D} (\msc A')$ is a Riesz basis for $\msc W$ satisfying the  biorthogonality  condition (\ref{biorthogonal}).  
		Moreover, the following reproducing formulas hold: 
		$$f=\sum_{\phi \in \msc D} \sum_{i=1}^r\langle f, M_{\phi}f_i'\rangle M_{\phi}f_i,  \  \forall f\in \msc V,  \   \mbox{and} \  g=\sum_{\phi\in \msc D} \sum_{i=1}^r \langle g, M_{\phi}f_i\rangle M_{\phi}f_i',  \  \forall g\in \msc W.$$
		
		\item[(ii)]  \textbf{Local setup:} If  there exists  ${\msc A'}=\{f'_i\}_{i=1}^r$  in $L^2(X; \mc H)$  such that for a.e. $x \in X$, $\{f'_i(x)\}_{i=1}^r$  is a  Riesz sequence in $\mc H$   satisfying the  following biorthogonality  condition 
		\begin{align}\label{Localbiorthogonal}
			\langle   f_i(x),  f'_{i'}(x) \rangle=\delta_{i, i'}, \quad i, i'=1, 2, \cdots, m,    \ a.e. \  x \in X, 
		\end{align}
		the infimum cosine angles  of $J_{\msc A}(x)=\Span \{f_i(x)\}_{i=1}^r$ and $J_{\msc A'}(x)=\Span \{f_i'(x)\}_{i=1}^r$ are greater than zero, i.e.,  
		\begin{align}\label{LocalInfimum}
			R(J_{\msc A}(x), J_{\msc A'}(x))>0 \ 
			\mbox{and} \  R(J_{\msc A'}(x), J_{\msc A}(x))>0, \ a.e. \ x \in X. 
		\end{align}
		Conversely if (\ref{LocalInfimum}) holds,      there exists   ${\msc A'}=\{f'_i\}_{i=1}^r$  in $L^2(X; \mc H)$ such that  for  a.e. $x\in X$,  $\{f_i'(x)\}_{i=1}^r$ is a Riesz sequence in  $\mc H$ satisfying the  biorthogonality  condition (\ref{Localbiorthogonal}).  Moreover,  the following reproducing formulas hold for $u\in J_{\msc A}(x)$,   and $v\in J_{\msc A'}(x)$:  
		\begin{align*} 
			u=\sum_{i=1}^r \langle u, f_i'(x)\rangle f_i(x), \ \mbox{and} \ v=\sum_{i=1}^r \langle v, f_i(x)\rangle f_i'(x), \ \mbox{for a.e.} \ x \in X.
		\end{align*}
	\end{enumerate}

\end{thm}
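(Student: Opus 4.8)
The plan is to treat the local statement (ii) as the fundamental one and to obtain the global statement (i) from it through the fiberization dictionary between $\mc E_{\msc D}(\msc A)$ on $\l2x$ and the pointwise systems $\{f_i(x)\}_{i=1}^r$ in $\mc H$, together with Theorem~\ref{Result-I}. Two elementary mechanisms drive both setups: biorthogonality between two Riesz systems produces the stated reproducing formulas and a two–sided estimate forcing the cosine angles to be positive; conversely, positivity of the cosine angles supplies an oblique projection out of which the biorthogonal dual is built. Throughout I would use that $\mc E_{\msc D}(\msc A)$ is a Riesz basis for $\msc V$ iff $\{f_i(x)\}_{i=1}^r$ is a Riesz basis for $J_{\msc A}(x)$ for a.e. $x$ with essentially uniform Riesz bounds, and that, since $\msc D$ is orthonormal in $L^2(X)$ and $\langle M_\phi f_i, M_{\phi'}f'_{i'}\rangle=\int_X\phi(x)\ol{\phi'(x)}\langle f_i(x),f'_{i'}(x)\rangle\dx$, the local biorthogonality (\ref{Localbiorthogonal}) implies the global biorthogonality (\ref{biorthogonal}).

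For the forward implications (biorthogonality $\Rightarrow$ positive angles) I would argue identically in the two setups. Assuming (\ref{biorthogonal}), the Riesz basis expansion of $f\in\msc V$ together with biorthogonality identifies the coefficients as $\langle f, M_\phi f'_i\rangle$, which yields the reproducing formula $f=\sum_{\phi\in\msc D}\sum_{i=1}^r\langle f, M_\phi f'_i\rangle M_\phi f_i$; the same computation in $\mc H$ gives $u=\sum_{i=1}^r\langle u, f'_i(x)\rangle f_i(x)$ for $u\in J_{\msc A}(x)$, and symmetrically for $\msc W$ and $J_{\msc A'}(x)$. To get the angle bound, fix $f\in\msc V$ and let $B,B'$ be the upper Riesz (Bessel) bounds of $\mc E_{\msc D}(\msc A)$ and $\mc E_{\msc D}(\msc A')$. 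Since each $M_\phi f'_i\in\msc W$, one has $\langle f, M_\phi f'_i\rangle=\langle P_{\msc W}f, M_\phi f'_i\rangle$, so
\begin{align*}
\|f\|^2\le B\sum_{\phi,i}|\langle f, M_\phi f'_i\rangle|^2 = B\sum_{\phi,i}|\langle P_{\msc W}f, M_\phi f'_i\rangle|^2\le BB'\,\|P_{\msc W}f\|^2,
\end{align*}
whence $R(\msc V,\msc W)\ge (BB')^{-1/2}>0$, and $R(\msc W,\msc V)>0$ by interchanging the roles of the two systems. The pointwise version is verbatim with the Riesz bounds of $\{f_i(x)\}$ and $\{f'_i(x)\}$, proving the forward half of (ii).

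For the converse I would use the oblique geometry recalled in the introduction. Positivity of $R(J_{\msc A}(x),J_{\msc A'}(x))$ and $R(J_{\msc A'}(x),J_{\msc A}(x))$ gives the (non-orthogonal) decomposition $\mc H=J_{\msc A}(x)\oplus J_{\msc A'}(x)^\perp$ and forces $\dim J_{\msc A'}(x)=\dim J_{\msc A}(x)=r$. Let $E(x)$ be the oblique projection onto $J_{\msc A}(x)$ along $J_{\msc A'}(x)^\perp$ and let $\{\til f_i(x)\}$ be the canonical dual of $\{f_i(x)\}$ inside $J_{\msc A}(x)$. Setting $f'_i(x):=E(x)^*\til f_i(x)\in J_{\msc A'}(x)$ and using $E(x)f_j(x)=f_j(x)$ gives $\langle f_j(x), f'_i(x)\rangle=\langle E(x)f_j(x),\til f_i(x)\rangle=\langle f_j(x),\til f_i(x)\rangle=\delta_{j,i}$, i.e. the biorthogonality (\ref{Localbiorthogonal}); moreover $E(x)^*$ restricts to an isomorphism of $J_{\msc A}(x)$ onto $J_{\msc A'}(x)$, so $\{f'_i(x)\}$ is a Riesz basis of $J_{\msc A'}(x)$. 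To promote this to the global statement (i), I would pass from $R(\msc V,\msc W)>0$, $R(\msc W,\msc V)>0$ to the pointwise positivity via Theorem~\ref{Result-I} (ii)$\Leftrightarrow$(iv), run the above construction at a.e.\ $x$ with the fibers of $\msc W$ in place of $J_{\msc A'}(x)$, and assemble $\msc A'=\{f'_i\}$; local biorthogonality then upgrades to (\ref{biorthogonal}) as noted, and the uniform Riesz bounds make $\mc E_{\msc D}(\msc A')$ a Riesz basis for $\msc W$ (whose fibers are the $J_{\msc A'}(x)$).

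The main obstacle is measurability together with $L^2$–integrability of the constructed dual. One must check that $x\mapsto E(x)^*\til f_i(x)$ is measurable — the canonical duals $\til f_i(x)=\sum_j (G(x)^{-1})_{ij}f_j(x)$ are measurable through the Gram matrix $G(x)=[\langle f_j(x),f_i(x)\rangle]$, and the oblique projection depends measurably on the two measurable range functions — and, crucially, that $f'_i\in\l2x$. The latter needs an essentially uniform lower bound on the fiber cosine angles, equivalently an essential bound $\esssup_x\|E(x)^*\|<\infty$; this is exactly what the \emph{global} positivity $R(\msc V,\msc W)>0$ should furnish, and combined with $\til f_i\in\l2x$ and $\mu(X)<\infty$ it gives $\int_X\|f'_i(x)\|^2\dx<\infty$. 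Establishing that the global angle controls the essential infimum of the fiber angles (so that $\|E(x)^*\|$ is essentially bounded) is the one quantitative point that requires care; the remaining verifications are the routine translations between the global and fiber pictures already used in Theorem~\ref{Result-I}.
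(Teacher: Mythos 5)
Your proposal is mathematically sound, and its forward directions coincide in substance with the paper's: the paper also derives $R\geq (BB')^{-1/2}$ from the Riesz bounds, except that it expands $P_{\msc V}f$ in the dual basis $\mc E_{\msc D}(\msc A^{\#})$ of $\mc E_{\msc D}(\msc A)$ inside $\msc V$ and uses $P_{\msc V}M_\phi f_i'=M_\phi f_i^{\#}$, whereas you identify the Riesz coefficients directly via biorthogonality and then apply the Bessel bound of $\mc E_{\msc D}(\msc A')$ to $P_{\msc W}f$ --- a slightly cleaner route to the same inequality. The genuine divergence is in the converse of the global statement (i): the paper stays global, observing that $\{P_{\msc W}M_\phi f_i\}$ is a Riesz basis for $\msc W$ when both angles are positive and then invoking \cite[Corollary 5.14]{BownikIversion2021} to produce a multiplication-generated dual Riesz basis, from which biorthogonality follows by moving $P_{\msc W}$ across the inner product; you instead descend to the fibers via Theorem \ref{Result-I} (ii)$\Leftrightarrow$(iv), build $f_i'(x)=E(x)^*\til f_i(x)$ with the oblique projection $E(x)$, and reassemble. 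Your route buys an explicit formula for the dual generators but must pay for measurability and $L^2$-integrability; the paper's route outsources exactly these issues to the Bownik--Iverson machinery. The quantitative point you flag as delicate --- that the global angle controls the essential infimum of the fiber angles, hence $\esssup_x\|E(x)^*\|<\infty$ --- is not actually a gap: it is precisely the identity (\ref{Pointwise-Inf}) recorded in Section \ref{Oblique} (from \cite[Theorem 4.1(iii)]{BownikIversion2021}), so together with $\mu(X)<\infty$ your integrability argument closes. For the local converse both you and the paper use an invertible (oblique or restricted orthogonal) projection between the two fibers composed with the canonical dual; these constructions produce the same biorthogonal system, and the paper is no more careful than you are about measurable selection there.
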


The paper is organized as follows; in Section \ref{Oblique}, we have discussed multiplication-generated oblique dual frames and their characterizations in connection with the Gramian matrix. Then in Section \ref{S: Proof}  the proofs of the Theorem \ref{Result-I}  and \ref{Result-II} are provided. The paper ends with Section \ref{Application} which discusses the applications to the locally compact group translated by the closed abelian subgroup.

\section{Multiplication generated oblique dual frames}
\label{Oblique}
Given a Parseval determining set $\mc D :=\{g_s\in L^\infty(X):s\in \mc M\}$ for $L^1(X)$ (see, (\ref{Parsevaldeter})), and a finite collection  of functions
$\msc A =\{\varphi_i\}_{i \in \mc I_r}$  in  $L^2(X; \mathcal H)$,   we recall  the MG system $\mc E_{\mc D}(\msc A)$  and its   associated MI   space  $\mc S_{\mc D}(\msc A)$  
given by
\begin{align}\label{ED}
 \mc E_{\mc D}(\msc A) :=\left\{M_{g_s}\varphi_i (\cdot)=g_s(\cdot)\varphi_i (\cdot): s\in \mc M, i \in \mc I_r \right\},  \quad \mbox{and} \quad   \mc S_{\mc D}(\msc A) :=\ol{\Span} \, \mc E_{\mc D}(\msc A),
\end{align}
respectively, where 
$(\mc M, {\mu_{\mc M}})$ is a $\sigma$-finite measure space,  and $\mc I_r :=\{1, 2, \cdots, r\}$, for $r \in \mathbb N$. 
The MG system $\mc E_{\mc D}(\msc A)$  
is said to be  a \textit{continuous frame}  (simply, \textit{frame}) for  $\mc S_{\mc D}(\msc A) $ if  the map    $(s, i) \mapsto  \langle f, M_{g_s}\varphi_i\rangle$ from $(\mc M\times \mc I_r)$ to $\mathbb C$ is measurable,  and there exist $0<A\leq B<\infty$ such that 
\begin{equation}\label{framedefn}
	A\|f\|^2\leq \sum_{i \in \mc I_r} \int_{\mc M} | \langle f, M_{g_s} \varphi_i\rangle |^2 d_{\mu_{\mc M}} (s)   \leq B\|f\|^2,\ \text{for all}
	\  f\in S_{\mc D}(\msc A) .
\end{equation}
If $S_{\mc D}(\msc A) =L^2(X;\mc H)$,           $\mc E_{\mc D}(\msc A)$ is  a  \textit{frame} for $L^2(X;\mc H)$, and it is \textit{Bessel} in $L^2(X;\mc H)$   when  only upper bound holds in (\ref{framedefn}).  

For a Bessel family $ \mc E_{\mc D}(\msc A)$ in $L^2(X; \mc H)$,   we define a bounded linear operator $T_{ \mc E_{\mc D}(\msc A)} :L^2(X;\mc H)\ra L^2(\mc M\times \mc I_r)$,  known as \textit{analysis operator},   by
\begin{equation*}\label{AnalysisOp}
	T_{ \mc E_{\mc D}(\msc A)}  (f)(s, i)=\langle f, M_{g_s}\varphi_i\rangle, \   \mbox{for all}   \ (s,i)\in \mc M\times \mc I_r, \, \mbox{and} \, f \in L^2(X;\mc H), 
\end{equation*}
and  its adjoint operator $T_{\mc E_{\mc D}(\msc A)}^*: L^2(\mc M\times \mc I_r)\ra L^2(X; \mc H)$, known as \textit{synthesis operator}, by
\begin{equation*}\label{SynthesisOp}
	T_{\mc E_{\mc D}(\msc A)}^* \psi  =  \sum_{i\in \mc I_r}\int_{\mc M} \psi(s, i)  M_{g_s}\varphi_i  \  d_{\mu_{\mc M}} (s),\   \mbox{for all}   \  \psi \in L^2(\mc M\times \mc I_r),
\end{equation*}
in the    weak sense.   Then,   the composition    $S_{\mc E_{\mc D}(\msc A)} :=T_{\mc E_{\mc D}(\msc A)}^*T_{\mc E_{\mc D}(\msc A)}:L^2(X;\mc H) \ra L^2(X;\mc H)$  is known as  \textit{frame operator}.

  At this juncture it can be noted that the  Bessel family ${\mc E_{\mc D}(\msc A)}$ is  a  continuous frame for ${\mc S_{\mc D}(\msc A)} $ with bounds $0<A \leq B$ if and only if  
 the frame operator $S_{\mc E_{\mc D}(\msc A)}\big|_{{\mc S_{\mc D}(\msc A)}}$ restricted on ${\mc S_{\mc D}(\msc A)}$ is positive, bounded and invertible with  $ A I_{{\mc S_{\mc D}(\msc A)}} \leq S_{\mc E_{\mc D}(\msc A)}\big|_{{\mc S_{\mc D}(\msc A)}}\leq B I_{{\mc S_{\mc D}(\msc A)}}$, where $I_{{\mc S_{\mc D}(\msc A)}}$ denotes the identity operator on $L^2(X;\mc H)$ which is restricted on     ${\mc S_{\mc D}(\msc A)}$. The inverse of frame operator $S_{\mc E_{\mc D}(\msc A)}$ satisfies $ \frac{1}{B} I_{{\mc S_{\mc D}(\msc A)}} \leq (S_{\mc E_{\mc D}(\msc A)}\big|_{{\mc S_{\mc D}(\msc A)}})^{-1}\leq \frac{1}{A} I_{{\mc S_{\mc D}(\msc A)}}$ and the family $\{  (S_{\mc E_{\mc D}(\msc A)}\big|_{{\mc S_{\mc D}(\msc A)}})^{-1} S_{\mc D}(\msc A)\}$
  is also a  continuous frame for ${\mc S_{\mc D}(\msc A)}$,  known as \textit{canonical dual frame} of ${\mc E_{\mc D}(\msc A)}$,  which satisfies the following reproducing formula for all $f \in{\mc S_{\mc D}(\msc A)}$ in the weak sense:  
  \begin{equation}\label{cont.canonical}
 f=\sum_{i\in \mc I_r}\int_{\mc M}\lan f,  (S_{\mc E_{\mc D}(\msc A)}\big|_{{\mc S_{\mc D}(\msc A)}})^{-1} M_{g_s}\varphi_i\ran M_{g_s}\varphi_i \, d_{\mu_{\mc M}}(s).
 \end{equation}
 The reproducing formula (\ref{cont.canonical})  gives an idea to find a new Bessel family, say $\{h_i\}_{i\in \mc I_r}=:\msc A'$ in  $L^2(X;\mc H)$, such that  the following decomposition  formula holds:   
 $$  
  f=\sum_{i\in \mc I_r}\int_{\mc M}\lan f, M_{g_s}h_i\ran M_{g_s}\varphi_i \, d_{\mu_{\mc M}}(s),   \ f \in {\mc S_{\mc D}(\msc A)}, \quad   i.e., \quad  T_{\mc E_{\mc D}(\msc A)}^*T_{\mc E_{\mc D}(\msc A')}\big|_{{\mc  S_{\mc D}(\msc A)}}=I_{\mc  S_{\mc D}(\msc A)},
 $$
 where $\mc  S_{\mc D}(\msc A')$ need not be a subset of ${\mc S_{\mc D}(\msc A)}$. It motivates to define duals other than canonical dual. 
 
 Next we define alternate and oblique duals for ${\mc E_{\mc D}(\msc A)}$. Askari and Gabardo  in \cite{gabardo2007uniqueness} introduced such duals for shift invariant subspaces of $L^2(\mathbb R^n)$ and   Heil et al.   in \cite{heil2009duals} defined them for a separable Hilbert space.

\begin{defn}\label{MG-dual}  Let  
	$\mc E_{\mc D}(\msc A)$ be a continuous frame for  
	$\mc S_{\mc D}(\msc A)$ and  $\mc E_{\mc D}(\msc A')$ be a Bessel family in  $L^2(X; \mathcal H)$. Then
\begin{enumerate}
\item[(i)]   $\mc E_{\mc D}(\msc A')$  is an \textit{alternate  MG-dual}  for   $\mc E_{\mc D}(\msc A)$  if $T_{\mc E_{\mc D}(\msc A)}^*T_{\mc E_{\mc D}(\msc A')}\big|_{\mc S_{\mc D}(\msc A)} = I_{\mc S_{\mc D}(\msc A)}$. 
\item[(ii)]  The    alternate  MG-dual $\mc E_{\mc D}(\msc A')$   is called an \textit{oblique  MG-dual}  for   $\mc E_{\mc D}(\msc A)$ if  $\mc E_{\mc D}(\msc A')$ is a continuous frame for  
$\mc S_{\mc D}(\msc A')$ and $T_{\mc E_{\mc D}(\msc A')}^*T_{\mc E_{\mc D}(\msc A)}\big|_{\mc S_{\mc D}(\msc A')} = I_{\mc S_{\mc D}(\msc A')}$. 
\item[(iii)]  The   oblique  MG-dual $\mc E_{\mc D}(\msc A')$  is an  \textit{MG-dual frame}  for   $\mc E_{\mc D}(\msc A)$ if  $\mc S_{\mc D}(\msc A')=\mc S_{\mc D}(\msc A)$. 
\end{enumerate}
\end{defn} 
 
In this section, we aim to characterize alternate and oblique duals for ${\mc E_{\mc D}(\msc A)}$. For this, we need the concept of Fourier transform in the abstract setup. We now provide a notion of Fourier transform for $L^2(X)$, introduced by Bownik and  Iverson in \cite{BownikIversion2021}.

\begin{defn}\label{ft}  For $f \in L^1(X) \bigcap L^2(X)$, the \textit{Fourier transform} $\mathcal F f \in L^2(\mathcal M)$ corresponding to the Parseval determining set $\mathcal D=\{\phi_s\in L^{\infty}(X): s \in \mc M\}$ is given  by 
	\begin{align}\label{FT}
	(\mc Ff)(s)=\int_{X}f(x)\ol{g_s (x)}d_{\mu_{X}}(x), \qquad  ~ a.e.~ s\in \mc M.
	\end{align}
	The  Fourier transform $\mathcal F$   is a unique extension from $L^2(X)$ to $L^2(\mathcal M)$ which is  linear and  isometry. 
\end{defn}
The \textit{Plancherel's relation} and \textit{Parseval's formula}  are given by  
\begin{align}\label{plancherel}
\|\mc Ff\|_{L^2(\mathcal M)}=\|f\|_{L^2(X)} \   \mbox{and} \   \lan \mc Ff, \mc Fh\ran_{L^2(\mc M)}=\lan f, h\ran_{L^2(X)} , \ \mbox{for all} \ f, h \in L^2 (X),
\end{align} 
respectively. 

The following result gives a way to move from global setup to local setup. For $\msc B \subset L^2(X; \mc H)$ and $x \in X$, the set $\msc B(x)$ is given by $
\msc B(x) := \{f(x): f \in \msc B\},$ which will be frequently used in the sequel.   

\begin{prop} \label{BesselCharrecterisation} Let $\msc A$ and $\msc A'$ be   finite collections of functions  in $L^2(X; \mc H)$ having same cardinality such that  $\mc E_{\mc D}(\msc A)$ and  $\mc E_{\mc D}(\msc A')$ are Bessel.  The  following holds true for all $f, g \in L^2(X; \mc H)$: 
	$$ 
	\left \langle T_{\mc E_{\mc D}(\msc A)}^*  T_{\mc E_\mc D(\msc A')}f,   g\right \rangle=\int_{X} \langle    T_{\msc A (x)} ^*   T_{\msc A' (x)} f(x),      g(x) \rangle  \domega, 
	$$
	where   the operators $T_{\msc A(x)}$ and  $T_{\msc A'(x)}$ are  analysis operators associated to    $\msc A(x)$ and $\msc A'(x)$, respectively, for a.e. $x \in X$.
\end{prop}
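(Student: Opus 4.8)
The plan is to expand the left-hand inner product through the adjoint relation
$\lan T_{\mc E_{\mc D}(\msc A)}^* T_{\mc E_{\mc D}(\msc A')} f, g\ran = \lan T_{\mc E_{\mc D}(\msc A')} f, T_{\mc E_{\mc D}(\msc A)} g\ran_{L^2(\mc M\times \mc I_r)}$ and then to transport the resulting integral over $\mc M$ back to an integral over $X$ using the Plancherel relation (\ref{plancherel}). Writing $\msc A=\{\varphi_i\}_{i=1}^r$ and $\msc A'=\{\varphi_i'\}_{i=1}^r$, the definition of the analysis operators unfolds the right-hand inner product into the finite sum $\sum_{i=1}^r \int_{\mc M} \lan f, M_{g_s}\varphi_i'\ran\, \ol{\lan g, M_{g_s}\varphi_i\ran}\,\ds$, so the whole problem reduces to understanding each term of this sum.

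The key observation is that, for each $i$, the map $s\mapsto \lan f, M_{g_s}\varphi_i'\ran$ is precisely the Fourier transform of the \emph{scalar} function $F_i(x):=\lan f(x),\varphi_i'(x)\ran_{\mc H}$: pulling the scalar $\ol{g_s(x)}$ out of the $\mc H$-inner product gives $\lan f, M_{g_s}\varphi_i'\ran=\int_X F_i(x)\ol{g_s(x)}\,\domega=(\mc F F_i)(s)$, matching formula (\ref{FT}). Likewise $s\mapsto\lan g, M_{g_s}\varphi_i\ran=(\mc F G_i)(s)$ for $G_i(x):=\lan g(x),\varphi_i(x)\ran_{\mc H}$. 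A Cauchy--Schwarz estimate in $\l2x$ shows $F_i,G_i\in L^1(X)$ (so their Fourier transforms are well defined by (\ref{FT})), and the sum becomes $\sum_{i=1}^r\lan \mc F F_i, \mc F G_i\ran_{L^2(\mc M)}$.

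The one genuine obstacle is that invoking (\ref{plancherel}) requires $F_i,G_i\in L^2(X)$, not merely $L^1(X)$, and this is where the Bessel hypotheses must be used. Since $\mc E_{\mc D}(\msc A')$ is Bessel, $T_{\mc E_{\mc D}(\msc A')}f\in L^2(\mc M\times \mc I_r)$, hence each $\mc F F_i\in L^2(\mc M)$; reading the defining identity (\ref{Parsevaldeter}) of the Parseval determining set as an equality in $[0,\infty]$ valid for every $L^1(X)$ function then forces $\|F_i\|_{L^2(X)}^2=\|\mc F F_i\|_{L^2(\mc M)}^2<\infty$, i.e. $F_i\in L^2(X)$; the Bessel hypothesis on $\mc E_{\mc D}(\msc A)$ yields $G_i\in L^2(X)$ in the same way. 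With $F_i,G_i\in L^1(X)\cap L^2(X)$, the integral formula (\ref{FT}) agrees with the isometric extension of $\mc F$, so (\ref{plancherel}) applies and gives $\lan \mc F F_i, \mc F G_i\ran_{L^2(\mc M)}=\lan F_i,G_i\ran_{L^2(X)}=\int_X F_i(x)\ol{G_i(x)}\,\domega$.

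It then remains only to recognise the integrand. Since $\msc A(x)=\{\varphi_i(x)\}_{i=1}^r$ and $\msc A'(x)=\{\varphi_i'(x)\}_{i=1}^r$, a direct computation gives $T_{\msc A(x)}^*T_{\msc A'(x)}f(x)=\sum_{i=1}^r \lan f(x),\varphi_i'(x)\ran\varphi_i(x)$, so that $\lan T_{\msc A(x)}^*T_{\msc A'(x)}f(x),g(x)\ran=\sum_{i=1}^r F_i(x)\ol{G_i(x)}$, which is measurable and, by Cauchy--Schwarz, integrable. Summing the previous display over the finite set $\mc I_r$ and interchanging the finite sum with the integral produces exactly $\int_X \lan T_{\msc A(x)}^*T_{\msc A'(x)}f(x),g(x)\ran\,\domega$, establishing the claimed identity. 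I expect everything after the $L^2$-membership upgrade to be routine bookkeeping with finite sums and the isometry $\mc F$; the only step needing care is the passage from $L^1$ to $L^2$ for $F_i$ and $G_i$ via (\ref{Parsevaldeter}).
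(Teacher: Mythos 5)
Your proof is correct and follows essentially the same route as the paper's: unfold the adjoint pairing, recognise $s\mapsto\lan f, M_{g_s}\varphi_i'\ran$ as $\mc F F_i$ for the scalar function $F_i(x)=\lan f(x),\varphi_i'(x)\ran$, apply Parseval, and identify the resulting integrand with $\lan T_{\msc A(x)}^*T_{\msc A'(x)}f(x),g(x)\ran$. The only (minor) divergence is in upgrading $F_i,G_i$ from $L^1(X)$ to $L^2(X)$: the paper invokes the pointwise equivalence that $\mc E_{\mc D}(\msc A)$ is Bessel iff $\msc A(x)$ is Bessel a.e.\ with the same bound, whereas you deduce it from the global Bessel bound together with reading (\ref{Parsevaldeter}) as an identity in $[0,\infty]$ --- both are legitimate and lead to the same conclusion.
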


\begin{proof} Let $\msc A=\{f_i\}_{i=1}^r$ and $\msc A'=\{g_i\}_{i=1}^r$ be two finite collections of functions  in $L^2(X; \mc H)$.  For  $f, g \in L^2(X; \mc H)$, the analysis operators $T_{\mc E_\mc D(\msc A)}$  and $T_{\mc E_\mc D(\msc A')}$ satisfy   $$(T_{\mc E_\mc D(\msc A)}g) (s,i)=\lan g, M_{\phi_s} \varphi_i\ran \ \mbox{and} \ (T_{\mc E_\mc D(\msc A')}f) (s,i)=\lan f, M_{\phi_s} \psi_i\ran, \  \mbox{for all} \ (s,i) \in \mc M \times \{1,\dots, r\},  $$ 
	and then  we compute the following:	
	{\small	
		\begin{align*}
		\left \langle T_{\mc E_\mc D(\msc A')}f,   T_{\mc E_{\mc D}(\msc A)} g\right \rangle 
		= & \int_{\mc M}  \sum_{i=1}^r\lan f, M_{\phi_s} \psi_i\ran  \overline{\lan g, M_{\phi_s} \varphi_i \ran} \,  d_{\mu_{\mc M}}(s) \\
		= & \int_\mc M  \sum_{i=1}^r \left(\int_X\langle f(x), \phi_s(x)\psi_i(x)\rangle  \domega\right)\times \overline{\left(\int_{ X}\langle g(x), \phi_s(x)\varphi_i(x) \rangle \domega\right)}  \dm\\
		= & \int_\mc M  \sum_{i=1}^r \left(\int_{X}\langle f(x),\psi_i(x)\rangle \overline{\phi_s(x)} \domega \right)\times \overline{\left(\int_{X}\langle  g(x), \varphi_i(x)\rangle  \overline{\phi_s(x)} \domega\right)}  \dm.
		\end{align*}
	}
	Choosing  $F_{\psi_i}(x)=\lan f(x),\psi_i (x)\ran$ and $G_{\varphi_i}(x)=\lan g(x),\varphi_i (x)\ran$, for $x \in X$ and $i \in \{1,\dots, r\}$, we have 		
	\begin{align*}
	\left \langle T_{\mc E_\mc D(\msc A')}f,   T_{\mc E_{\mc D}(\msc A)} g\right \rangle 
	= & \int_\mc M   \sum_{i=1}^r \mc F F_{\psi_i} (s)  \overline{\mc F G_{\varphi_i} (s)}    \dm =  \sum_{i=1}^r  \int_\mc M  \mc F F_{\psi_i} (s)  \overline{\mc F G_{\varphi_i} (s)}    \dm \\	
	=&   \sum_{i=1}^r\langle   \mc F F_{\psi_i},   \mc F G_{\varphi_i} \rangle     
	=  \sum_{i=1}^r \langle     F_{\psi_i},     G_{\varphi_i} \rangle      \\
	= &  \sum_{i=1}^r \int_X   F_{\psi_i} (x) \overline{G_{\varphi_i}(x)}   \dx     \\
	=&  \sum_{i=1}^r \int_X
	\lan f(x),\psi_i (x)\ran \overline{\lan g(x),\varphi_i (x)\ran}  \dx,
	\end{align*}
	using Fourier transform in (\ref{FT}),  Parseval's formula (\ref{plancherel}) on $L^2(X)$ and Fubini's theorem over $\mc M \times \{1,2,\dots, r\}$, $F_{\psi_i},  G_{\varphi_i} \in   L^2 (X; \mc H)$ hold in the above calculations  by noting the facts that $\mc E_\mc D(\msc A)$ is Bessel systems with bound $B$ if and only if $\msc A(x)$  is Bessel systems with bound $B$ for a.e. $x \in X$, and the following estimate
	\begin{align*}
	\int_X   \sum_{i=1}^r   \left| F_{\psi_i} (x) \overline{G_{\varphi_i}(x)} \right| \dx    
	\leq &\left(\int_X    \sum_{i=1}^r\left| \langle f(x), \psi_{t}(x)\rangle\right|^2 \dx \right)^\frac{1}{2}
	\times\left(\int_X    \sum_{i=1}^r \left|\langle g(x), \varphi_{t}(x)\rangle\right|^2 \dx\right)^\frac{1}{2} \\
	\leq &   \sqrt{B B'}  \|f\| \|g\|,
	\end{align*}
	using Cauchy-Schwarz inequality, where we assume $\mc E_\mc D(\msc A)$ and $\mc E_\mc D(\msc A')$ are Bessel systems with bounds $B$ and $B'$, respectively. 	Therefore using Fubini's theorem over   $\mc N \times X$, we get
	\begin{align*}
	\left \langle T_{\mc E_\mc D(\msc A')}f,  T_{\mc E_{\mc D}(\msc A)} g\right \rangle 
	= &  \sum_{i=1}^r \int_X      \lan f(x),\psi_i (x)\ran \overline{\lan g(x),\varphi_i (x)\ran}  \dx    \\
	= &      \int_X   \sum_{i=1}^r T_{\msc A' (x)}   (f (x))(i) \overline{T_{\msc A (x)}   (g(x))(i)}   \dx \\
	=  &   \int_X  \langle      T_{\msc A' (x)}   f(x),    T_{\msc A (x)} g(x) \rangle  \dx, 
	\end{align*}
	where 	$T_{\msc A' (x)}  (f (x))(i)=\lan f(x),\psi_i (x)\ran$ and $T_{\msc A (x)}   (g(x))(i)=\lan g(x),\varphi_i (x)\ran$, for $i=1,2,\dots, r$.  	  
\end{proof}

Next we provide a characterization of alternate dual  associated with the Gramian operators. The \textit{Gramian} and \textit{dual Gramian operators} are given 
$$
G_{\msc A}(x)=T_{\msc A}(x)T_{\msc A}^*(x) \ \mbox{and} \  \tilde{G}_{\msc A}(x)=T_{\msc A}^*(x)T_{\msc A}(x), \ a.e. \ x\in X,   
$$
where $T_{\msc A}(x)$ and $T_{\msc A}^*(x)$ denote the  analysis and synthesis operators corresponding to $\msc A(x)=\{\varphi_i(x)\}_{i \in \mc I_r}$.  For   $\msc A=\{\varphi_i\}_{i\in \mc I_r}$ and $\msc A'=\{\psi_i\}_{i\in \mc I_r}$,  the operator  $G_{\msc A, \msc A'}(x)=	\left[\langle\varphi_j(x),\psi_i(x)\rangle\right]_{i, j \in \mc I_r}$ is known as   the \textit{mixed  Gramian operator}, for a.e. $x\in X$.  The following result is a measure theoretic abstraction of \cite[Theorem 4.1]{kim2005infimum} and \cite[Theorem 5(a)]{gabardo2007uniqueness}.

\begin{prop}\label{AlternateDual}   In addition to  the assumptions of Proposition \ref{BesselCharrecterisation},  let us assume   $\mc E_{\mc D}(\msc A)$ be  a frame for $\mc S_{\mc D}(\msc A)$. Then the  system  $\mc E_{\mc D}(\msc A')$ is  an alternate MG-dual for $\mc E_{\mc D}(\msc A)$ if and only if for a.e.   the system $\msc A'(x)=\{\psi(x): \psi\in \msc A'\}$ is an alternate dual for $\msc A(x)=\{\varphi(x): \varphi\in \msc A\}$, equivalently,
	the Gramian $G_{\msc A}(x)$ and mixed Gramian $G_{\msc A, \msc A'}(x)$  operators satisfy the following relation: 
	$$G_{\msc A}(x)G_{\msc A, \msc A'}(x)=G_{\msc A}(x),  \  \mbox{for a.e.} \  x \in X.$$
\end{prop}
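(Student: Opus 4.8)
The plan is to prove Proposition~\ref{AlternateDual} by reducing the global alternate-dual condition to a pointwise statement via Proposition~\ref{BesselCharrecterisation}, and then translating the pointwise alternate-dual condition into the stated Gramian identity. First I would record that, since $\mc E_{\mc D}(\msc A)$ is a frame for $\mc S_{\mc D}(\msc A)$, by definition $\mc E_{\mc D}(\msc A')$ is an alternate MG-dual exactly when $T_{\mc E_{\mc D}(\msc A)}^*T_{\mc E_{\mc D}(\msc A')}\big|_{\mc S_{\mc D}(\msc A)}=I_{\mc S_{\mc D}(\msc A)}$. Applying Proposition~\ref{BesselCharrecterisation}, for all $f,g\in L^2(X;\mc H)$ we have
\begin{align*}
\big\langle T_{\mc E_{\mc D}(\msc A)}^*T_{\mc E_{\mc D}(\msc A')}f,\,g\big\rangle=\int_X\big\langle T_{\msc A(x)}^*T_{\msc A'(x)}f(x),\,g(x)\big\rangle\,\domega.
\end{align*}
The key first step is to argue that the global operator equation holds if and only if $T_{\msc A(x)}^*T_{\msc A'(x)}=P_{J_{\msc A}(x)}$ (the projection onto $J_{\msc A}(x)$) for a.e.\ $x\in X$; the restriction to $\mc S_{\mc D}(\msc A)$ is precisely what makes the identity land on the range function $J_{\msc A}(x)$ rather than on all of $\mc H$.

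The mechanism for converting the integral identity into a pointwise one is the standard measurable-selection/localization argument for MI spaces: because both sides of the displayed equation are integrals against arbitrary $f,g$, and because membership $f\in\mc S_{\mc D}(\msc A)$ is characterized fiberwise by $f(x)\in J_{\msc A}(x)$ a.e.\ (via the range function $J_{\msc A}$), one can test with functions $f,g$ supported on small measurable sets where the fibers behave controllably and deduce that $\langle T_{\msc A(x)}^*T_{\msc A'(x)}u,v\rangle=\langle u,v\rangle$ for all $u\in J_{\msc A}(x)$ and $v\in\mc H$, a.e.\ $x$. That is exactly the statement that $\msc A'(x)$ is an alternate dual for the finite frame $\msc A(x)$ on $J_{\msc A}(x)$. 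I would lean here on the fact (used already in the excerpt) that $\mc E_{\mc D}(\msc A)$ is Bessel iff $\msc A(x)$ is Bessel a.e., and similarly that the frame bounds transfer fiberwise, so that $\msc A(x)$ is genuinely a finite frame for $J_{\msc A}(x)$ for a.e.\ $x$.

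It then remains to show, purely at the level of finite-dimensional linear algebra in $\mc H$ for fixed $x$, that $\msc A'(x)$ being an alternate dual of the finite frame $\msc A(x)$ is equivalent to the Gramian identity $G_{\msc A}(x)G_{\msc A,\msc A'}(x)=G_{\msc A}(x)$. The natural route is to write $T_{\msc A(x)}^*T_{\msc A'(x)}=P_{J_{\msc A}(x)}$ and compose on the left with the analysis operator $T_{\msc A(x)}$ and on the right with the synthesis operator $T_{\msc A(x)}^*$; using $G_{\msc A}(x)=T_{\msc A}(x)T_{\msc A}^*(x)$ and the fact that $(T_{\msc A(x)}T_{\msc A'(x)}^*)^*$ reproduces the mixed Gramian entries $\langle\varphi_j(x),\psi_i(x)\rangle$, the identity $T_{\msc A(x)}^*T_{\msc A'(x)}=P_{J_{\msc A}(x)}$ collapses to $G_{\msc A}(x)G_{\msc A,\msc A'}(x)=G_{\msc A}(x)$. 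One direction is immediate from the definition; the converse uses that $T_{\msc A(x)}$ is injective on $J_{\msc A}(x)^{}$ in the appropriate sense (equivalently $G_{\msc A}(x)$ is invertible on $\range G_{\msc A}(x)$, i.e.\ on the coefficient space detecting $J_{\msc A}(x)$), so that left-cancellation of $G_{\msc A}(x)$ recovers the projection identity on $J_{\msc A}(x)$.

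The main obstacle I anticipate is the localization step: rigorously passing from the integral identity $\int_X\langle T_{\msc A(x)}^*T_{\msc A'(x)}f(x)-f(x),g(x)\rangle\,\domega=0$ for all $f\in\mc S_{\mc D}(\msc A)$, $g\in L^2(X;\mc H)$ to the pointwise operator equation requires care about measurability of $x\mapsto T_{\msc A(x)}^*T_{\msc A'(x)}$ and about exhausting $J_{\msc A}(x)$ by fiberwise test functions drawn from $\mc S_{\mc D}(\msc A)$. The cleanest way is to invoke the range-function machinery of Bownik--Ross (the correspondence between MI spaces and measurable range functions, and the fiberwise characterization of membership) so that both the Bessel/frame bounds and the reproducing identity can be decoupled fiber by fiber; the finite-dimensionality of each $J_{\msc A}(x)$ (since $\msc A$ is a finite collection) keeps all operator manipulations at a given $x$ elementary and sidesteps any subtlety about unbounded or non-closed ranges.
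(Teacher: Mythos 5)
Your proposal follows essentially the same route as the paper's proof: fiberize the alternate-dual identity via Proposition \ref{BesselCharrecterisation}, localize the resulting integral identity to the pointwise statement $T_{\msc A(x)}^{*}T_{\msc A'(x)}\big|_{J_{\msc A}(x)}=I_{J_{\msc A}(x)}$ by testing against fiberwise-supported functions in $\mc S_{\mc D}(\msc A)$ (the paper implements this with a countable dense set, the projections $P_{J_{\msc A}}(x)x_n$, the sets $S_{m,n}$, and a contradiction on a positive-measure set, invoking the Bownik--Ross range-function characterization of membership), and then pass to the Gramian identity by composing with the analysis and synthesis operators. One caveat: your intermediate claim that the global equation is \emph{equivalent} to $T_{\msc A(x)}^{*}T_{\msc A'(x)}=P_{J_{\msc A}(x)}$ on all of $\mc H$ overstates what holds --- the alternate-dual condition only forces the restriction to $J_{\msc A}(x)$ to be the identity, and the operator may act nontrivially on $J_{\msc A}(x)^{\perp}$ since the $\psi_i(x)$ need not lie in $J_{\msc A}(x)$; this is harmless here because your localization and your left/right composition with $T_{\msc A(x)}$ and $T_{\msc A(x)}^{*}$ in fact only use the restricted identity, which is exactly what the paper proves.
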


\begin{proof} Suppose the  system  $\mc E_{\mc D}(\msc A')$ is  an alternate MG-dual for $\mc E_{\mc D}(\msc A)$, we have $T^*_{\mc E_{\mc D}(\msc A)} T_{\mc E_{\mc D}(\msc A')}\big|_{S_{\mc D}(\msc A)}=I_{S_{\mc D}(\msc A)}$. 
By Proposition \ref{BesselCharrecterisation}, we get 	\begin{align}\label{AlternateEqn}
		\int_X    \lan T^*_{\msc A (x)} T_{\msc A'(x)}f(x),  g(x)\ran       \dx
		=\int_X\lan f(x), g(x)\rangle \dx, \ \forall f,g \in \sd.
	\end{align}
	At first we will show,   $T^*_{\msc A (x)} T_{\msc A'(x)}\big|_{J_\msc A (x)}=I_{J_\msc A (x)}$,  for a.e. $x \in X$.  For this, let $\{x_n\}_{n \in \mathbb N}$ be  a countable dense subset of $\mc H$ and let $P_{J_\msc A} (x)$ be  an orthogonal projection onto $J_\msc A (x)$ for a.e. $x \in X$. Clearly for a.e. $x \in X$, $\{P_{J_\msc A}  (x) x_n\}_{n \in \mathbb N}$ is    dense in $J_\msc A (x)$.
	Next for each $m,n \in \mathbb N$, we  define a set $S_{m, n}$ as follows:
	\begin{align*}
		S_{m, n}= \Big\{x \in X: \rho_{m, n}(x) := & \lan T^*_{\msc A (x)} T_{\msc A'(x)}   P_{J_\msc A}(x)x_m, P_{J_\msc A}(x)x_n \ran \\
		&   -   \lan P_{J_\msc A}(x)x_m, P_{J_\msc A}(x)x_n\ran\neq \{0\} \Big\}.
	\end{align*}
	Now we assume  on the contrary $T^*_{\msc A (x)} T_{\msc A'(x)}\big|_{J_\msc A (x)} \neq I_{J_\msc A (x)}$  on a Borel measurable subset $Y$ of $X$ having positive measure. Then, there are $m_0, n_0 \in \mathbb N$ such that  $S_{m_0, n_0} \bigcap Y$ is a Borel measurable subset  of $X$ having positive measure, and hence   either real or imaginary parts of $\rho_{m_0, n_0}(x)$ are strictly positive or negative on a.e. $x \in S_{m_0, n_0} \bigcap Y$. Firstly we assume that the real part of $\rho_{m_0, n_0}(x)$ is strictly positive on $S_{m_0, n_0} \bigcap Y$. By choosing   a Borel measurable subset $S$ of $S_{m_0, n_0} \bigcap Y$ having positive measure, we define functions $h_1$ and $h_2$ as follows:  
	\noindent Then, we have $h_1 (x ), h_2 (x )\in J_\msc A (x)$,  for a.e. $x \in X$ since $\{P_{J_\msc A}  (x) x_n\}_{n \in \mathbb N}$ is    dense in $J_\msc A (x)$, and hence we get $h_1, h_2 \in \sd$ in view of \cite[Theorem 2.4]{bownik2015structure}. Therefore using $f=h_1$,  $g=h_2$ in  (\ref{AlternateEqn}), we obtain  	 	$\int_S   \rho_{m_0, n_0}(x )        \dx =0$ which is a contradiction since the measure of $S$ is positive and the real part of $\rho_{m_0, n_0}(x)$ is strictly positive on $S$. Other cases follow in a similar way.  Since $J_{\msc A}(x)={\mbox{span}}\{\varphi_i(x)\}_{i=1}^r$, for each $i=1,2\dots, r$, we have,
	$\varphi_i(x)=\sum_{i=1}^{r}\langle \varphi_i(x), \psi_i(x)\rangle \varphi_i(x)$, which is equivalent to 
	$$T^*_{\msc A (x)} T_{\msc A' (x)}\big|_{J_\msc A (x)} = I_{J_{\msc A (x)}},  \mbox{for a.e.}\  x \in X.$$
	 Equivalently, we have $G_{\msc A}(x)G_{\msc A, \msc A'}(x)=G_{\msc A}(x),  \  \mbox{for a.e.} \  x \in X, $ by looking at the definition of Gramian and mixed Gramian operators. Therefore we get the result.

	Conversely,  assume  $G_{\msc A}(x)G_{\msc A, \msc A'}(x)=G_{\msc A}(x),  \  \mbox{for a.e.} \  x \in X$, equivalently,   $T^*_{\msc A (x)} T_{\msc A' (x)}\big|_{J_\msc A (x)} = I_{J_{\msc A (x)}}$,  for a.e. $x \in X$. Then  we have     $T^*_{\mc E_{\mc D}(\msc A)} T_{\mc E_{\mc D}(\msc A')}\big|_{S_{\mc D}(\msc A)}=I_{S_{\mc D}(\msc A)}$,    follows from  the computation 
 	\begin{align*}
		\left \langle  T^*_{\mc E_{\mc D}(\msc A)} T_{\mc E_\mc D(\msc A')}f,  g\right \rangle 
		&=\int_X \left \langle   T_{ \msc A'(x)}f(x),  T_{\msc A(x)}g(x)\right \rangle \dx\\  
		&=\int_X \left \langle   T^*_{\msc A(x)}T_{ \msc A'(x)}f(x),  g(x)\right \rangle \dx\\  
		&=\int_X \left \langle    f(x),  g(x)\right \rangle \dx, \ \forall  f, g \in S_{\mc D}(\msc A)
	\end{align*}
  in view of  Proposition \ref{BesselCharrecterisation}.  
  	\end{proof}

 
  The following result is a measure-theoretic abstraction of \cite[Theorem 4.1]{kim2005infimum} for oblique dual frames associated with the rank of mixed Gramian operator and the dimension of range functions. 

\begin{prop}\label{Oblique-Dual}  In addition to  the assumptions of Proposition \ref{BesselCharrecterisation}, let us assume $\mc E_{\mc D}(\msc A)$ and  $\mc E_{\mc D}(\msc A')$  be   frames for $\mc S_{\mc D}(\msc A)$  and $\mc S_{\mc D}(\msc A')$, respectively, such that $\mc E_{\mc D}(\msc A')$ is an alternate MG-dual for $\mc E_{\mc D}(\msc A)$ and 
	\begin{align}\label{rank}
	\rank \  G_{\msc A, \msc A'}(x)=\dim J_{\msc A} (x)=\dim J_{\msc A'} (x), \  a.e. \ x\in X,
	\end{align}
	where  $J_{\msc A}(x)=\Span\{f(x):  f \in \msc A\}$ and $J_{\msc A'}(x)=\Span\{g(x): g \in \msc A'\}$. 
Then 
$\mc E_{\mc D}(\msc A')$ is an oblique MG-dual for $\mc E_{\mc D}(\msc A)$.
 \end{prop}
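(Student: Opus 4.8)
The plan is to show that $\mc E_{\mc D}(\msc A')$ is an oblique MG-dual by verifying the two remaining conditions in Definition \ref{MG-dual}(ii): namely that $\mc E_{\mc D}(\msc A')$ is a continuous frame for $\mc S_{\mc D}(\msc A')$ (already assumed in the hypotheses) and that the reverse relation $T_{\mc E_{\mc D}(\msc A')}^*T_{\mc E_{\mc D}(\msc A)}\big|_{\mc S_{\mc D}(\msc A')} = I_{\mc S_{\mc D}(\msc A')}$ holds. Since $\mc E_{\mc D}(\msc A')$ is already assumed to be an alternate MG-dual for $\mc E_{\mc D}(\msc A)$, Proposition \ref{AlternateDual} gives us the local relation $T^*_{\msc A (x)} T_{\msc A' (x)}\big|_{J_\msc A (x)} = I_{J_{\msc A (x)}}$ for a.e. $x \in X$. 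The whole thrust of the proof is therefore to upgrade this to the symmetric local relation $T^*_{\msc A' (x)} T_{\msc A (x)}\big|_{J_{\msc A'} (x)} = I_{J_{\msc A'} (x)}$ for a.e. $x$, and then lift back to the global statement using Proposition \ref{BesselCharrecterisation} in exactly the same way as the converse direction of Proposition \ref{AlternateDual}.

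First I would fix a.e. $x \in X$ and work purely at the fiber level in the finite-dimensional setting. The operator $T^*_{\msc A(x)} T_{\msc A'(x)}$ maps $\mc H \to \mc H$ but, in terms of the mixed Gramian, its action between the finite-dimensional subspaces is encoded by $G_{\msc A, \msc A'}(x) = [\langle \varphi_j(x), \psi_i(x)\rangle]$. The rank hypothesis (\ref{rank}) is the essential new ingredient: it asserts $\rank G_{\msc A, \msc A'}(x) = \dim J_{\msc A}(x) = \dim J_{\msc A'}(x)$. Writing $d = \dim J_{\msc A}(x) = \dim J_{\msc A'}(x)$, the mixed Gramian, viewed as the matrix of the map $J_{\msc A'}(x) \to J_{\msc A}(x)$ induced by $T^*_{\msc A(x)}T_{\msc A'(x)}$ (equivalently the adjoint relation between the two analysis/synthesis pairs), is a $d \times d$ block of full rank $d$, hence invertible on those subspaces. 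The relation from Proposition \ref{AlternateDual} says $T^*_{\msc A(x)}T_{\msc A'(x)}$ restricted to $J_{\msc A}(x)$ is the identity; combined with the equality of dimensions and full rank, this forces $T^*_{\msc A(x)}T_{\msc A'(x)}$ to be a bijection $J_{\msc A'}(x) \to J_{\msc A}(x)$, and a dimension count then shows its adjoint $T^*_{\msc A'(x)}T_{\msc A}(x)$ acts as the identity on $J_{\msc A'}(x)$.

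Concretely, the key step is a linear-algebra argument: for $u \in J_{\msc A}(x)$ we already have $u = \sum_i \langle u, \psi_i(x)\rangle \varphi_i(x)$, i.e. the reproducing identity on $J_{\msc A}(x)$. Taking inner products against an arbitrary $v \in J_{\msc A'}(x)$ and using that the map is full-rank and dimensions match, I would show the symmetric identity $v = \sum_i \langle v, \varphi_i(x)\rangle \psi_i(x)$ holds for all $v \in J_{\msc A'}(x)$; the full-rank condition guarantees there is no kernel obstruction, so the two reproducing formulas are genuinely dual to one another rather than only one-sided. This is the statement $T^*_{\msc A'(x)}T_{\msc A(x)}\big|_{J_{\msc A'}(x)} = I_{J_{\msc A'}(x)}$ a.e. Finally, I would transfer this fiberwise identity to the global operator identity $T_{\mc E_{\mc D}(\msc A')}^*T_{\mc E_{\mc D}(\msc A)}\big|_{\mc S_{\mc D}(\msc A')} = I_{\mc S_{\mc D}(\msc A')}$ by the integral representation of Proposition \ref{BesselCharrecterisation}, testing against $f, g \in \mc S_{\mc D}(\msc A')$ and invoking \cite[Theorem 2.4]{bownik2015structure} to pass between the global subspace and its fibers, mirroring the converse computation in the proof of Proposition \ref{AlternateDual}.

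The main obstacle I anticipate is the linear-algebra core, specifically justifying that the one-sided identity on $J_{\msc A}(x)$ plus the rank condition is genuinely enough to force the reverse identity on $J_{\msc A'}(x)$. The subtlety is that $T^*_{\msc A(x)}T_{\msc A'(x)}$ and its adjoint need not be the identity on all of $\mc H$ (only on the relevant finite-dimensional subspaces and not necessarily orthogonally), so I must be careful to work with the restrictions to $J_{\msc A}(x)$ and $J_{\msc A'}(x)$ and argue that the mixed Gramian's full rank makes the restricted map a linear isomorphism whose inverse relation is precisely the reverse reproducing formula. Once this fiberwise isomorphism is pinned down, the measurability of the fiber subspaces and the global-to-local transfer are routine given the machinery already established in Propositions \ref{BesselCharrecterisation} and \ref{AlternateDual}.
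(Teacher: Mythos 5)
Your proposal is correct and follows essentially the same route as the paper: reduce to the fibers via Proposition \ref{AlternateDual}, use the rank condition (\ref{rank}) to make the restricted projection between $J_{\msc A}(x)$ and $J_{\msc A'}(x)$ invertible (equivalently $J_{\msc A'}(x)\cap J_{\msc A}(x)^\perp=\{0\}$, which is exactly the ``no kernel obstruction'' you need when you test the one-sided reproducing identity against $v\in J_{\msc A'}(x)$), deduce the reverse fiberwise identity, and lift back to the global operator identity. The paper implements your ``key linear-algebra step'' by pairing with $P(x)\varphi(x)$ where $P(x)=P_{J_{\msc A'}(x)}|_{J_{\msc A}(x)}$ is invertible by \cite[Lemma 3.1]{kim2005infimum}, which is the same argument in adjoint form.
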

\begin{proof}  	Observing the proof of  Proposition \ref{AlternateDual}, we get $\msc A'(x)$ is an alternate dual to $\msc A(x)$, for a.e. $x\in X$ since $\mc E_{\mc D}(\msc A')$ is an alternate MG-dual for $\mc E_{\mc D}(\msc A)$. Then for a.e. $x\in X$, we can write $\varphi(x)=\sum_{i=1}^r\langle \varphi(x), g_i(x)\rangle f_i(x)$, for each $\varphi\in \mc S_{\mc D}(\msc A)$.  Further note that $P(x):= P_{J_{\msc A'(x)}}|_{ J_{\msc A}(x)}: J_{\msc A(x)}\ra J_{\msc A'(x)}$ is invertible    in view of \cite[Lemma 3.1]{kim2005infimum}  and relation (\ref{rank}).  Therefore for $k=1,2, \dots, r$ and a.e. $x\in X$,  we get
	  \begin{align*}
\langle P(x)\varphi(x), g_k(x)\rangle& =\langle P_{J_{\msc A'(x)}}\varphi(x), g _k(x)\rangle=\langle \varphi(x),P_{J_{\msc A'(x)}}g _k(x)\rangle=\langle \varphi(x),g _k(x)\rangle\\
&=\left\langle\langle \sum_{i=1}^r\varphi(x),g _i(x)\rangle f_i(x),g _k(x)\right\rangle\\
 &=
\left\langle \varphi(x), \sum_{i=1}^r \langle g _k(x),f_i(x)\rangle g _i(x)\right\rangle\\
&=
\left\langle P(x)\varphi(x), \sum_{i=1}^r \langle g _k(x),f_i(x)\rangle g _i(x)\right\rangle. 
\end{align*}
 and hence 
$g _k(x)=\sum_{i=1}^r \langle g _k(x),f_i(x)\rangle g _i(x)$ since $P(x)$ is invertible. Hence the result holds by noting Proposition \ref{AlternateDual}. 
\end{proof}

The next  result   tells that the space $L^2(X; \mc H)$ can be decomposed   with the help of $\mc S_{\mc D}(\msc A)$ and $\mc S_{\mc D}(\msc A')$ using the rank condition (\ref{rank}).  We use  the angle between two MI subspaces and their point-wise characterizations for its proof.
	From Definition \ref{D-InfCosine}, note that  
	$$(P_{\mc S_{\mc D}(\msc A)}|_{\mc S_{\mc D}(\msc A')}f)(x)=(P_{\mc S_{\mc D}(\msc A)}P_{\mc S_{\mc D}(\msc A')}f)(x)=P_{\mc S_{\mc D}(\msc A')}(x)P_{\mc S_{\mc D}(\msc A)}(x)f(x)
= P_{\mc S_{\mc D}(\msc A)(x)}|_{\mc S_{\mc D}(\msc A')(x)}f(x).$$  
and by  \cite[ Theorem 4.1 (iii)]{BownikIversion2021}, we have 
$$\mbox{inf}\left\{\frac{\|P_{\mc S_{\mc D}(\msc A')}f\|}{\|f\|}: f\in\mc S_{\mc D}(\msc A)\backslash \{0\}\right\}=\essinf_{x \in X}\left\{\frac{\|P_{\mc S_{\mc D}(\msc A')(x)}w\|}{\|w\|}: w\in J_{\msc A}(x)\backslash \{0\}\right\}.$$
Thus  if we define,  $\sigma (\mc S_{\mc D}(\msc A)):= \{x \in X:  J_{\msc A}(x) \neq 0 \} $ then
\begin{align}\label{Pointwise-Inf}
		R(\mc S_{\mc D}(\msc A), \mc S_{\mc D}(\msc A'))= \begin{cases}
			\essinf_{x\in \sigma( \mc S_{\mc D}(\msc A))} R(J_{\msc A}(x), J_{\msc A'}(x)) \  \mbox{if} \  \mu_X( \sigma( \mc S_{\mc D}(\msc A)))>0,\\
			1, \mbox{otherwise}.
		\end{cases}
. 
\end{align}

\begin{prop}\label{ObliqueDecomp}
	 In addition to  the assumptions of Proposition \ref{BesselCharrecterisation}, 
	  the  following  statements are equivalent:
	\begin{itemize}
	\item[(i)] For a.e. $x \in X$,  the relation (\ref{rank}) holds, i.e.,	$\rank \  G_{\msc A, \msc A'}(x)=\dim J_{\msc A} (x)=\dim J_{\msc A'} (x), \  a.e. \ x\in X,$  and      there exists a constant $C>0$ such that 
 $$\|(G_{\msc A}(x))^{1/2}G_{\msc A, \msc A
 }(x)^\dagger(G_{\msc A'}(x))^{1/2}\| \leq C, \  a.e. \ x \in \{x \in X:  J_{\msc A}(x) \neq 0 \} := \sigma (\mc S_{\mc D}(\msc A)),
	$$
	where $G_{\msc A, \msc A'}(x)^\dagger$ denotes the pseudo inverse of $G_{\msc A, \msc A'}(x)$. 
		\item[(ii)] $L^2(X;
		\mc H)=\mc S_{\mc D}(\msc A)\oplus \mc S_{\mc D}(\msc A')^\perp$. 
		\item[(ii)] $L^2(X;
		\mc H)=\mc S_{\mc D}(\msc A')\oplus \mc S_{\mc D}(\msc A)^\perp$. 
		\item[(iv)] $R(\mc S_{\mc D}(\msc A), \mc S_{\mc D}(\msc A'))>0$ and $R(\mc S_{\mc D}(\msc A'), \mc S_{\mc D}(\msc A))>0$  
	\end{itemize}
\end{prop}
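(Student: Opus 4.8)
The plan is to separate the four conditions into a \emph{global} functional-analytic block, $(ii)\Leftrightarrow(iii)\Leftrightarrow(iv)$, treated purely at the level of the closed subspaces $V:=\mc S_{\mc D}(\msc A)$ and $W:=\mc S_{\mc D}(\msc A')$, and a \emph{local} Gramian block $(i)\Leftrightarrow(iv)$ routed through the pointwise formula (\ref{Pointwise-Inf}).

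\textbf{Global block.} I would first prove the Hilbert-space equivalence $\mc H=V\oplus W^\perp\iff\big(R(V,W)>0\text{ and }R(W,V)>0\big)\iff\mc H=W\oplus V^\perp$, which delivers $(ii)\Leftrightarrow(iv)\Leftrightarrow(iii)$ simultaneously. The implication ``$R(V,W)>0$ and $R(W,V)>0\Rightarrow\mc H=V\oplus W^\perp$'' is \cite[Lemma 3.1]{christensen2004oblique}, recalled in the Introduction. For the converse, a topological direct sum $\mc H=V\oplus W^\perp$ produces, via the open mapping theorem, a bounded idempotent $E$ with $\range E=V$ and $\Ker E=W^\perp$; for $v\in V$ one has $v-P_Wv=P_{W^\perp}v\in\Ker E$, so $v=Ev=EP_Wv$ and hence $\|v\|\le\|E\|\,\|P_Wv\|$, i.e.\ $R(V,W)\ge\|E\|^{-1}>0$. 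The adjoint $E^*$ is the bounded projection onto $W$ along $V^\perp$, so the same estimate gives both $R(W,V)>0$ and the symmetric decomposition $\mc H=W\oplus V^\perp$, which is $(iii)$.

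\textbf{Local block.} By (\ref{Pointwise-Inf}) applied to both ordered pairs, $(iv)$ is equivalent to the existence of $c>0$ with $R(J_{\msc A}(x),J_{\msc A'}(x))\ge c$ and $R(J_{\msc A'}(x),J_{\msc A}(x))\ge c$ for a.e.\ $x\in\sigma(\mc S_{\mc D}(\msc A))$ (the case $\mu_X(\sigma)=0$ being trivial). It then suffices to match this with $(i)$ pointwise, using that the Bessel hypotheses make $\msc A(x),\msc A'(x)$ finite frames for $J_{\msc A}(x),J_{\msc A'}(x)$ a.e.\ (Proposition \ref{BesselCharrecterisation}). Writing $G_{\msc A,\msc A'}(x)=T_{\msc A'}(x)T_{\msc A}^*(x)$, its rank equals $\dim J_{\msc A}(x)-\dim\big(J_{\msc A}(x)\cap J_{\msc A'}(x)^\perp\big)$; hence $\rank G_{\msc A,\msc A'}(x)=\dim J_{\msc A}(x)$ is exactly $R(J_{\msc A}(x),J_{\msc A'}(x))>0$ (in finite dimensions the infimum is attained, so positivity $\Leftrightarrow J_{\msc A}(x)\cap J_{\msc A'}(x)^\perp=\{0\}$), and appending $=\dim J_{\msc A'}(x)$ encodes the reverse positivity through $\rank G_{\msc A,\msc A'}(x)=\rank G_{\msc A',\msc A}(x)$. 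Thus the rank part of (\ref{rank}) is precisely the a.e.\ two-sided positivity. Since $R(V,W)=R(W,V)$ whenever both are positive, the two local angles coincide a.e., and by \cite[Lemma 3.1, Theorem 4.1]{kim2005infimum} (together with Proposition \ref{Oblique-Dual}) this common value is controlled below by $\|(G_{\msc A}(x))^{1/2}G_{\msc A,\msc A'}(x)^\dagger(G_{\msc A'}(x))^{1/2}\|^{-1}$. Consequently the uniform bound $\le C$ in $(i)$ is equivalent to a uniform positive lower bound on $x\mapsto R(J_{\msc A}(x),J_{\msc A'}(x))$, which by (\ref{Pointwise-Inf}) is exactly $(iv)$; both implications $(i)\Rightarrow(iv)$ and $(iv)\Rightarrow(i)$ read off this correspondence.

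\textbf{Main obstacle.} The global block is essentially bookkeeping once the bounded oblique projection is extracted. The real work is the local block: establishing the quantitative identification of the (common) infimum cosine angle with the Gramian norm $\|(G_{\msc A}(x))^{1/2}G_{\msc A,\msc A'}(x)^\dagger(G_{\msc A'}(x))^{1/2}\|$, and controlling the measurability and the essential-infimum passage so that the single constants $c$ and $C$ faithfully transfer between the a.e.\ pointwise bounds and the global conditions $(i)$ and $(iv)$.
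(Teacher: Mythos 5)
Your outline is sound, and it in fact supplies more argument than the paper does: the paper's entire proof of this proposition is the single sentence that the result can be established by following Bownik--Iverson, Theorem 4.18, and Kim--Kim--Lim, Theorem 3.8. Your global block --- extracting the bounded idempotent $E$ with $\range E=\mc S_{\mc D}(\msc A)$ and $\Ker E=\mc S_{\mc D}(\msc A')^{\perp}$ from the topological direct sum, deriving $R(\mc S_{\mc D}(\msc A),\mc S_{\mc D}(\msc A'))\ge\|E\|^{-1}$, and passing to $E^{*}$ for the symmetric decomposition --- is exactly the standard Tang/Christensen--Eldar mechanism and is correct. Your local block matches the strategy the paper's citations encode: the identity $\rank G_{\msc A,\msc A'}(x)=\dim J_{\msc A}(x)-\dim\bigl(J_{\msc A}(x)\cap J_{\msc A'}(x)^{\perp}\bigr)$ correctly translates the rank condition in (\ref{rank}) into a.e.\ two-sided positivity of the fiber angles, and (\ref{Pointwise-Inf}) correctly upgrades a uniform pointwise bound to the global condition (iv). What you, like the authors, do not actually prove is the quantitative identification $R(J_{\msc A}(x),J_{\msc A'}(x))=\|(G_{\msc A}(x))^{1/2}G_{\msc A,\msc A'}(x)^{\dagger}(G_{\msc A'}(x))^{1/2}\|^{-1}$; this is the content of Kim--Kim--Lim, Theorem 3.8 (not Theorem 4.1, which you cite, and Proposition \ref{Oblique-Dual} does not supply it either), and both you and the paper import it wholesale. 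Two small points to tighten: in the direction $(iv)\Rightarrow(i)$ you should note that two-sided positivity forces $\sigma(\mc S_{\mc D}(\msc A))=\sigma(\mc S_{\mc D}(\msc A'))$ up to null sets, so that the rank condition also holds (trivially) off the spectrum; and the measurability of $x\mapsto R(J_{\msc A}(x),J_{\msc A'}(x))$, which you flag as an obstacle, is already absorbed into the paper's derivation of (\ref{Pointwise-Inf}) from Bownik--Iverson, so nothing further is needed there.
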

\begin{proof} The result can be establish easily following the steps of   \cite[Theorem 4.18]{BownikIversion2021}  and  \cite[Theorem 3.8]{kim2005infimum}.
	\end{proof}

At the end of this section we provide a method to construct alternate (oblique) duals, which is  an abstraction version of \cite[Lemma 5.1]{kim2005infimum}.
\begin{prop}\label{Oblique-Dual-Construction}  For a  
	   $\sigma$-finite measure space  $(X, \mu_{X})$      with $\mu (X)<\infty$,  consider the assumptions of Proposition \ref{BesselCharrecterisation} and  assume $\mc E_{\mc D}(\msc A)$ to be a frame for $\mc S_{\mc D}(\msc A).$ 
Define a class of functions $\tilde{\msc A}'=\{h_i\}_{i=1}^r$  associated to  $\msc A'=\{g_i\}_{i=1}^r  \subset L^2(X;\mc H)$ by 
\begin{align}\label{dualisation}
	h_{i}(x)=\begin{cases}
		\sum_{j=1}^{r}\overline{G_{\msc A, \msc A'}(x)^\dagger_{i,j}}\,  g_j(x),  \ \mbox{if } x\in \sigma (\mc S_{\mc D}(\msc A)),\\
		0, \quad   otherwise .\  
		\end{cases}
\end{align}
Then,  $\mc E_{\mc D}(\tilde{\msc A}')$ is  an alternate (oblique) MG-dual for $\mc E_{\mc D}(\msc A)$ if the Proposition \ref{ObliqueDecomp} (i) rank condition holds and there exists a $C>0$ such that $\|G_{\msc A,\msc A'}(x)^\dagger\|\leq C$	 a.e. $x\in \sigma (\mc S_{\mc D}(\msc A))$.
\end{prop}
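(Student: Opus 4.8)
The plan is to transfer everything to the pointwise (local) level, where the construction (\ref{dualisation}) is engineered precisely so that the finite system $\tilde{\msc A}'(x)$ becomes an alternate dual of $\msc A(x)$, and then to lift back to the global level through Propositions \ref{BesselCharrecterisation}, \ref{AlternateDual} and \ref{Oblique-Dual}. Throughout I write $\msc A=\{f_i\}_{i=1}^r$, $\msc A'=\{g_i\}_{i=1}^r$ and abbreviate $G(x):=G_{\msc A,\msc A'}(x)$. First I would check that $\tilde{\msc A}'$ is genuinely a family in $L^2(X;\mc H)$ and that $\mc E_{\mc D}(\tilde{\msc A}')$ is Bessel. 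Since $\|G(x)^\dagger\|\le C$ a.e. on $\sigma(\mc S_{\mc D}(\msc A))$, the entries of $G(x)^\dagger$ are bounded in modulus by $C$, so $\|h_i(x)\|\le C\sum_{j}\|g_j(x)\|$ pointwise; squaring, summing over $i$ and integrating over $X$ gives $h_i\in L^2(X;\mc H)$, using $g_j\in L^2(X;\mc H)$. For the Bessel property I would use the factorization $T_{\tilde{\msc A}'}(x)=G(x)^\dagger T_{\msc A'}(x)$ coming from $\langle u,h_i(x)\rangle=(G(x)^\dagger T_{\msc A'}(x)u)_i$, so the pointwise Bessel bound of $\tilde{\msc A}'(x)$ is at most $C^2$ times that of $\msc A'(x)$; a uniform pointwise bound yields the global Bessel property by the local--global equivalence recalled in the proof of Proposition \ref{BesselCharrecterisation}.

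The core step is the pointwise alternate-dual identity. A direct computation, in which the conjugate built into (\ref{dualisation}) is exactly what is needed, gives the mixed Gramian of the constructed pair,
\begin{align*}
G_{\msc A,\tilde{\msc A}'}(x)_{i,j}=\langle f_j(x),h_i(x)\rangle=\sum_{k}(G(x)^\dagger)_{i,k}\,\langle f_j(x),g_k(x)\rangle=(G(x)^\dagger G(x))_{i,j},
\end{align*}
so $G_{\msc A,\tilde{\msc A}'}(x)=G(x)^\dagger G(x)$, the orthogonal projection onto $(\ker G(x))^\perp$. By Proposition \ref{AlternateDual} it then suffices to verify $G_{\msc A}(x)\,G(x)^\dagger G(x)=G_{\msc A}(x)$, equivalently that $G_{\msc A}(x)$ annihilates $\ker G(x)$. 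Writing $G_{\msc A}(x)=T_{\msc A}(x)T_{\msc A}^*(x)$ and $G(x)=T_{\msc A'}(x)T_{\msc A}^*(x)$ one sees immediately that $\ker G_{\msc A}(x)=\ker T_{\msc A}^*(x)\subseteq\ker G(x)$; the rank condition (\ref{rank}) forces $\dim\ker G(x)=r-\rank G(x)=r-\dim J_{\msc A}(x)=r-\rank G_{\msc A}(x)=\dim\ker G_{\msc A}(x)$, so the inclusion is an equality and $\ker G(x)=\ker G_{\msc A}(x)$. Hence $G_{\msc A}(x)$ vanishes on $\ker G(x)$, the required identity holds a.e., and Proposition \ref{AlternateDual} shows that $\mc E_{\mc D}(\tilde{\msc A}')$ is an alternate MG-dual for $\mc E_{\mc D}(\msc A)$.

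For the oblique conclusion I would invoke Proposition \ref{Oblique-Dual} for the pair $(\msc A,\tilde{\msc A}')$, for which two things remain: the rank condition (\ref{rank}) for this pair and the fact that $\mc E_{\mc D}(\tilde{\msc A}')$ is a frame for $\mc S_{\mc D}(\tilde{\msc A}')$. The rank condition is immediate from the projection formula, $\rank G_{\msc A,\tilde{\msc A}'}(x)=\rank\big(G(x)^\dagger G(x)\big)=\rank G(x)=\dim J_{\msc A}(x)$, while $\dim J_{\tilde{\msc A}'}(x)=\dim J_{\msc A}(x)$ follows from $T_{\tilde{\msc A}'}^*(x)=T_{\msc A'}^*(x)(G(x)^\dagger)^*$: since $\range\big((G(x)^\dagger)^*\big)=\range G(x)\subseteq\range T_{\msc A'}(x)=(\ker T_{\msc A'}^*(x))^\perp$, the synthesis operator $T_{\msc A'}^*(x)$ is injective on $\range G(x)$, giving $\dim J_{\tilde{\msc A}'}(x)=\rank G(x)=\dim J_{\msc A}(x)$.

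The remaining point, and the one I expect to be the main obstacle, is the uniform lower frame bound for $\tilde{\msc A}'(x)$. Each finite system $\tilde{\msc A}'(x)$ is automatically a frame for its span $J_{\tilde{\msc A}'}(x)$, but a genuine frame for $\mc S_{\mc D}(\tilde{\msc A}')$ requires an essential-infimum lower bound over $x$, which does not come for free from the construction. I would extract it from the factorization $T_{\tilde{\msc A}'}(x)=G(x)^\dagger T_{\msc A'}(x)$ together with the bound $\|G(x)^\dagger\|\le C$ and the uniform lower frame bound of $\mc E_{\mc D}(\msc A)$, controlling the smallest nonzero singular value of $T_{\tilde{\msc A}'}(x)$ restricted to $J_{\tilde{\msc A}'}(x)$ uniformly in $x$. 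With the frame property established, Proposition \ref{Oblique-Dual} upgrades the alternate MG-dual to an oblique MG-dual, completing the argument.
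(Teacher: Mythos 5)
Your proposal is correct and follows the same route as the paper's proof: reduce everything to the fibers, identify the Gramian identity $G_{\msc A}(x)G_{\msc A,\tilde{\msc A}'}(x)=G_{\msc A}(x)$ as the criterion via Proposition \ref{AlternateDual}, and verify it from the rank condition. In fact you carry out more than the paper does: the paper only records $G_{\tilde{\msc A}'}(x)=G_{\msc A,\msc A'}(x)^\dagger G_{\msc A'}(x)(G_{\msc A,\msc A'}(x)^\dagger)^*$ and then cites Kim--Kim--Lim for the verification of the Gramian identity, whereas your computation $G_{\msc A,\tilde{\msc A}'}(x)=G(x)^\dagger G(x)=P_{(\ker G(x))^\perp}$ together with the equality $\ker G(x)=\ker G_{\msc A}(x)$ (forced by $\ker T_{\msc A}^*(x)\subseteq\ker G(x)$ and the rank condition) is exactly the content of that citation, done correctly and self-containedly. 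Your treatment of the oblique upgrade through Proposition \ref{Oblique-Dual}, including the rank verification $\rank G_{\msc A,\tilde{\msc A}'}(x)=\rank G(x)=\dim J_{\msc A}(x)=\dim J_{\tilde{\msc A}'}(x)$, is also sound.

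The one step you leave open, the uniform lower frame bound for $\tilde{\msc A}'(x)$ on $J_{\tilde{\msc A}'}(x)$, does close, but not quite with the ingredients you name: the lower frame bound of $\mc E_{\mc D}(\msc A)$ is not the key input. First note that since $\range G(x)=T_{\msc A'}(x)(J_{\msc A}(x))\subseteq\range T_{\msc A'}(x)$ and both have dimension $\dim J_{\msc A}(x)=\dim J_{\msc A'}(x)$, they are equal, and likewise $J_{\tilde{\msc A}'}(x)=J_{\msc A'}(x)$. The bound $\|G(x)^\dagger\|\le C$ means $\|G(x)c\|\ge\|c\|/C$ for $c\in(\ker G(x))^\perp=\range T_{\msc A}(x)$, which combined with the upper Bessel bound $B$ of $\msc A(x)$ gives $\|T_{\msc A'}(x)u\|\ge\|u\|/(C\sqrt{B})$ for all $u\in J_{\msc A}(x)$; splitting each $g_i(x)$ into its components in $J_{\msc A}(x)$ and $J_{\msc A}(x)^\perp$ and using the rank condition, this yields a uniform lower frame bound $1/(C^2B)$ for $\msc A'(x)$ on all of $J_{\msc A'}(x)$. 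Then for unit $w\in J_{\tilde{\msc A}'}(x)=J_{\msc A'}(x)$ one has $T_{\msc A'}(x)w\in\range T_{\msc A'}(x)=\range G(x)$, so $\|T_{\tilde{\msc A}'}(x)w\|=\|G(x)^\dagger T_{\msc A'}(x)w\|\ge\|T_{\msc A'}(x)w\|/\|G(x)\|\ge 1/(C\sqrt{B}\cdot\sqrt{BB'})$, where $B'$ is the Bessel bound of $\msc A'$. So the obstacle you flag is genuine but surmountable, and the correct levers are $\|G(x)^\dagger\|\le C$, the upper bounds $\|G(x)\|\le\sqrt{BB'}$, and the coincidence of $\range G(x)$ with $\range T_{\msc A'}(x)$, rather than the lower frame bound of $\mc E_{\mc D}(\msc A)$.
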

\begin{proof}  Note that  $G_{\tilde{\msc A'}}(x)= 
G_{\msc A, \msc A'}(x)^\dagger G_{\msc A'}(x)(G_{\msc A, \msc A'}(x)^\dagger)^*,$   a.e. $x \in \sigma(\mc S_{\mc D}(\msc A))$ and $G_{\tilde{\msc A'}}(x)= 
0$,  otherwise,       $\|G_{\tilde{\msc A'}}(x)\|$ is bounded above  due to Bessel property of $\msc A' (x)$. By the Proposition \ref{AlternateDual} we need to verify $G_{\msc A}(x)G_{\msc A, {\tilde{ \msc A'}}}(x)=G_{\msc A}(x)$  which follows from  the same technique of proof  \cite[Lemma 5.3]{kim2005infimum}. 
\end{proof}

\section{{\bf Proof of Theorem \ref{Result-I} and  Theorem \ref{Result-II}}}\label{S: Proof}

\begin{proof}[Proof of Theorem \ref{Result-I}]
(ii) $\ra$ (i): Assume that (ii) holds, then we have  $\rank \ G_{\msc A,\msc B}(x)=\dim J_{\msc A}(x)=\dim J_{\msc B}(x)$ a.e. $x\in X$ by Proposition \ref{ObliqueDecomp} (iv). Considering the  projection   $P(x):=P_{J_{\msc A}(x)|_{\msc B(x)}}: J_{\msc B(x)}\ra J_{\msc A(x)}$, we have  $G_{\msc A,\msc B}(x)= T_{\msc B}(x) P(x)T_{\msc A}^*(x)$,  and $P(x)$ is invertible by  \cite[Lemma 3.1]{kim2005infimum}.  Then the length of $\mc S_{\mc D}(\msc A)= \mbox{length} \ \mc S_{\mc D}(\msc B)$ since $\mc S_{\mc D}(\msc A)$ and $\mc S_{\mc D}(\msc  B)$ are finitely generated. Let  $r$   be  the common length of $\mc S_{\mc D}(\msc A)$ and  $\mc S_{\mc D}(\msc B)$.  Then using \cite[Theorem 2.6]{bownik2015structure} there exists $\msc A^\#=\{f_i^\#\}_{i=1}^r$ and $\msc  K=\{g_i^\#\}_{i=1}^r$ such that $\mc S_{\mc D}(\msc A^\#)=\mc S_{\mc D}(\msc A)$ and $\mc S_{\mc D}(\msc B^\#)=\mc S_{\mc D}(\msc B)$. Hence  $R(S_{\mc D}(\msc A^\#),S_{\mc D}(\msc B^\#))>0$  and  $R(S_{\mc D}(\msc B^\#),S_{\mc D}(\msc A^\#))>0$. Further  applying  Proposition \ref{ObliqueDecomp} (iv), there exists a positive constant $C$ such that 
 $\|G_{\msc A^\#}(x)^{1/2}G_{\msc A^\#, \msc B^\#}(x)^\dagger G_{\msc B^\#}(x)^{1/2}\|\leq C$ a.e. $x \in \sigma (\mc S_{\mc D}(\msc A))$. 
 
  For the class of functions $\msc A^\#=\{f_i^\#\}_{i=1}^r$, define the new class of functions $\msc A'=\{f_i'\}_{i=1}^r$ 
  by,   
\begin{align*}
{	f_i'}(x)=\sum_{j=1}^r \ol{((G_{\msc A^\#}(x)^\dagger)^{1/2})}_{i,j} f_j^\#(x), \  \mbox{for a.e. } x \in X, \quad \ \mbox{for each} \  i \in \{1,2,\dots,r\}.
\end{align*}
Applying the singular value decomposition of the positive semidefinite matrix, $G_{\msc A^\#}(x)$ for a.e. $x\in X$,   $$G_{\msc A^\#}(x)=Q(x)D(x)Q(x)^*,$$
where the diagonal entries of $D(x)$ are the non-zero eigenvalues of $G_{\msc A^\#}(x)$, and $Q(x)$ is unitary.
Also,  note that 
\begin{align*}
	\|{f_i'}(x)\|^2=(G_{\msc A^\#}(x)^\dagger)^{1/2}G_{\msc A^\#}(x)(G_{\msc A^\#}(x)^\dagger)^{1/2})_{ii}=0 \ \mbox{or} \ 1. 
\end{align*}
For each $i \in \{1,2,\dots, r\}$,  $\|{f_i'}\|^2=\int_{X} \|f_i'(x)\|^2 \dx<\mu(X)<\infty$, hence $f_i'\in L^2(X; \mc H)$. Also
\begin{align*}
	G_{\msc A'}(x)= (G_{\msc A^\#}(x)^\dagger)^{1/2} G_{\msc A^\#}(x) (G_{\msc A^\#}(x)^\dagger)^{1/2}=G_{\msc A^\#}(x)^\dagger G_{\msc A^\#}(x), \ a.e. \ x \in X. 
\end{align*}
The eigenvalues of $G_{\msc A'}(x)$ are $0$ or $1$. Thus  $\mc E_{\mc D}(\msc A')$ is a frame  for $\mc S_{\mc D}(\msc A^\#)$.
Now we will show $\mc S_{\mc D}(\msc A'
)=\mc S_{\mc D}(\msc A^\#)$. It is clear that $\mc S_{\mc D}(\msc A')(x)\subset \mc S_{\mc D}(\msc A^\#)(x)$ for a.e. $x\in X$. Also,
\begin{align*}
	\dim J_{\msc A
'}(x)=\rank  \ G_{\msc A
'}(x)=\rank  \ G_{\msc A^\#}(x)=	\dim \  J_{\msc A^\#}(x).
\end{align*}
Hence $J_{\msc A
'}(x)=J_{\msc A^\#}(x)$ a.e.  $x\in X$, i.e.,  $S_{\mc D}(\msc A')=S_{\mc D}(\msc A^\#)$\cite[Proposition 2.2 (iii)]{iverson2015subspaces}. The class $\mc E_{\mc D}(\msc A
')$  is a tight frame for $\mc S_{\mc D}(\msc A^\#)$.
 In a similar way we can  show that there exists a collection $\msc B'=\{g_i'\}_{i=1}^r$ such that  $\mc E_{\mc D}({\msc B'})$  is a tight frame for $\mc S_{\mc D}(\msc B^\#)$, and also we have 
$$G_{{\msc A'}, {  \msc B'}}(x)=(G_{\msc B^\#}(x)^\dagger)^{1/2}G_{\msc A^\#, \msc B^\#}(x)(G_{\msc A^\#}(x)^\dagger)^{1/2}.
$$
Since $P(x)$ is invertible a.e.
$G_{{\msc A'}, {\msc B'}}(x)^\dagger=(G_{ \msc A^\#}(x))^{1/2}G_{\msc A^\#, \msc B^\#}(x)^\dagger(G_{\msc B^\#}(x))^{1/2}$. Now
$\|G_{{\msc A'}, {\msc B'}}(x)^\dagger\|=\|(G_{ \msc A^\#}(x))^{1/2}G_{\msc A^\#, \msc B^\#}(x)^\dagger(G_{\msc B^\#}(x))^{1/2}\|\leq C$  a.e. $x \in \sigma (\mc S_{\mc D}(\msc A^\#))$. The result follows by 
 Proposition \ref{Oblique-Dual-Construction}, $\mc E_{\mc D}({\msc B'})$ is an oblique MG-dual for $\mc E_{\mc D}({\msc A'})$. 
 
 
(i)$\ra$(ii):  Define  a map $\Xi: L^2(X; \mc H)\ra \mc S_{\mc D}(\msc A)$ by $\Xi f=\sum_{i=1}^r \int_{\mc M} \langle f, M_{\phi}  g_i' \rangle M_{\phi} f_i'\dm$ . Then $\Xi$ is not necessarily an orthogonal, projection. Therefore, $L^2(X; \mc H)=\range \  \Xi \ \oplus \Ker \ \Xi=\mc S_{\mc D}(\msc A)\oplus \Ker \ \Xi$. We now show that  Ker $\Xi=\mc S_{\mc D}(\msc B)^\perp$. Let $\varphi\in \Ker \Xi$. Then $\varphi=f-\Xi f$, $f\in L^2(X;\mc H)$. For $\psi \in \mc S_{\mc D}(\msc B)$,
\begin{align*}
	\langle \varphi, \psi \rangle &= \langle f-\Xi f, \psi\rangle =\langle f, \psi\rangle -\langle \Xi f, \psi\rangle = \langle f, \psi\rangle - \sum_{i=1}^{r} \int_{\mc M} \langle f, M_{\phi} g_i'\rangle \langle M_{\phi} f_i', \psi \rangle\dm \\
&=\langle f, \psi\rangle -\left \langle f, \sum_{i=1}^r \int_{\mc M}  \langle \psi, M_{\phi}f_i' \rangle M_{\phi}  g_i'\dm \right\rangle \\
&= \langle f,\psi\rangle -\langle f, \psi\rangle =0. 
\end{align*}
Hence $\varphi \in \mc S_{\mc D}(\msc B)^\perp$. Other side, if $\varphi \in \mc S_{\mc D}(\msc B)^\perp$ then $\Xi \varphi=0$.

 (i)$\leftrightarrow$(iii):  Since $\mc E_{\mc D}(\msc  A')$ and $\mc E_{\mc D}(\msc B')$ are frames for $\mc S_{\mc D}(\msc A)$ and $\mc S_{\mc D}(\msc B)$, respectively, the  systems  $\msc A'(x)=\{f_i'(x):i=1,\dots, r\}$ and $\msc B'(x)=\{g_i'(x):i=1,\dots, r\}$   are frames for $J_{\msc A}(x)$ and $J_{\msc B}(x)$ respectively \cite[Theorem 2.10]{iverson2015subspaces},    for a.e. $x\in X$.  The rest part of the result follows by Proposition \ref{AlternateDual}. 

 In a similar ways, the converse part follows.

(iv)$\ra$(iii): Assume (iv) holds, i.e., there exist  frames $\{ f_i'(x)\}_{i=1}^r$ and  $\{g_i'(x)\}_{i=1}^r$   for $J_{\msc A}(x)$ and $J_{\msc B}(x)$, respectively. We need to show that $R(J_{\msc A}(x),J_{\msc B}(x))>0 \ 
\mbox{and} \  R(J_{\msc B}(x),J_{\msc A}(x))>0$, which is equivalent to $J_{\msc A}(x)\oplus J_{\msc B}(x)^\perp=\mc H$  \cite[Lemma 2.1]{christensen2004oblique}. For this define a map, 
$\Xi: \mc H\ra J_{\msc A}(x)$ by 
$\Xi(f)=\sum_{i=1}^r \langle f, g_i'(x)\rangle f_i'(x)$. Then $\Xi$ need not be an orthogonal projection. Hence $$\mc H=\range \Xi \ \oplus \  \Ker \Xi= J_{\msc A}(x)\oplus \Ker \Xi.$$ 
Our aim to prove  $\Ker \Xi=J_{\msc B}(x)^\perp$. Let $u\in \Ker \Xi$. Then $u=f-\Xi f$ for some $f$. Let $h\in J_{\msc B}(x)$. Writing
\begin{align*}
	\langle u, h\rangle =\langle f-\Xi f, h\rangle=\langle f,h \rangle -\left \langle \sum_{i=1}^r\langle f, g_i'(x)\rangle f_i'(x),h \right\rangle=\langle f,h \rangle -\sum_{i=1}^r \langle  f, g_i'(x)\rangle \langle f_i'(x),h\rangle\\=\langle f,h \rangle -\left\langle f,\sum_{i=1}^r  \langle h, f_i'(x)\rangle g_i'(x)\right \rangle=\langle f, h\rangle -\langle f, h\rangle=0, 
\end{align*}
we have $ u\in J_{\msc B}(x)^\perp,$ and if $u\in J_{\msc B}(x)^\perp$, 
then $u\in \Ker \Xi$.  

(ii)$\ra$ (iv): If $R(\mc S_{\mc D}(\msc A),\mc S_{\mc D}(\msc B))>0$, we have  $R(J_{\msc A}(x),J_{\msc B}(x))>0 $  for a.e. $x\in  X$.  The remaining part follows easily.
\end{proof}

Before moving towards the proof of Theorem \ref{Result-II} we need the concept of  supremum cosine angle. For two subspaces $ V$ and $ W$ of a  Hilbert space $\mc H$, the supremum cosine angle between them is: 
$S( V,  W)= \sup_{v\in  V \backslash \{0\}} \|P_{ W} v\|/\|v\|$. The correlation  between supremum and  infimum cosine angle is related with the following: $R(V, W)=\sqrt {1-S(V, W^\perp)^2}$. One of the main uses of supremum cosine angle is  to determine an addition of two closed subspaces is again closed or not.  The sum of two closed subspaces $V$ and $W$ is again closed and $V\bigcap W=\{0\}$ if and only if $S(V, W)<1$ \cite[Theorem 2.1]{Tang2000obliqueprojection}.

\begin{proof}[Proof of the Theorem \ref{Result-II}]
(i) {\bf Global Setup:}
Suppose $\mc E_{\msc D}(\msc A)$ and $\mc E_{\msc D}(\msc A')$ are Riesz basis for $\msc V$ and $\msc W$, with constants $A,B$ and $ A',  B'$ respectively,  and  are biorthogonal. By   \cite[ Theorem 2.3]{iverson2015subspaces} we have $\msc A(x)=\{f_i(x):i=1,2,\dots, r\}$ and $\msc A'(x)=\{f'_i(x):i=1,2,\dots, r\}$ are    Riesz bases for $J_{\msc A}(x)$ and $J_{\msc A'}(x)$  for a.e. $x \in X$.  It suffices to show $R(\msc V,\msc W)=R(\msc W, \msc V)>0.$ The dual Riesz basis for $\mc E_{\msc D}(\msc A)$ in $\msc V$ is of the form $\mc E_{\msc D}(\msc A^{\#})$ where $\msc A^{\#}=\{f_i^{\#}:i=1,2,\dots,r\}\ss \msc V$. Therefore the orthogonal projection $P_{\msc V}$ onto $\msc V$ can be expressed as 
$$
P_{\msc V}f=\sum_{i=1}^r \sum_{\phi \in \msc D}\langle f, M_{\phi}f_i^{\#}\rangle M_{\phi} f_i=\sum_{i=1}^r \sum_{\phi \in \msc D}\langle f, M_{\phi}f_i^{}\rangle M_{\phi} f_i^{\#}, \  \forall  \ f\in L^2(X;\mc H).
$$
Observe that $P_{\msc V} M_{\phi} f_i'=f_i^{\#}$,  for all $\phi \in \msc D$ and $i=1,2,\dots, r$. For $f\in \msc W\backslash \{0\}$, 
we have $f=\sum_{i=1}^r \sum_{\phi \in \msc D}c_i^{\phi} M_{\phi}f_i'$,  where 
$$A' \sum_{i=1}^r\sum_{\phi \in \msc D}|c_{i}^\phi|^2\leq \|f\|^2 \leq B' \sum_{i=1}^r\sum_{\phi \in \msc D}|c_{i}^\phi|^2. $$
Then $P_{\msc V}f=\sum_{i=1}^r \sum_{\phi \in \msc D} c_{i}^\phi M_\phi f_i^{\#}$  and $\frac{\|P_{\msc V}f\|^2}{\|f\|^2}\geq \frac{B^{-1}\sum_{i,\phi}|c_{i}^\phi|^2}{B'\sum_{i, \phi}|c_{i}^\phi|^2}=\frac{1}{BB'}$,
since $\mc E_{\msc D}(\msc A^\#)$ is a Riesz basis with constants $B^{-1},  A^{-1}$. Hence  $R(\msc V, \msc W)\geq (BB')^{-1/2}$.

 Conversely, Since $R(\msc  W, \msc V)>0$, then $R(\msc W, \msc V)\|f\| \leq \|f\|\leq \|f\|, \forall f\in \msc V$. Since $\mc E_{\msc D}(\msc A)$ is a Riesz basis for $\msc V$,  then the corresponding projection on $ \msc W$,  that is,   $\{P_{\msc W}M_{\phi}f_i:\phi \in \msc D, i=1,2,\dots,r\}$ is a Riesz basis for $\msc W$.  Since $R(\msc V,\msc W)>0$, we get   $\ol{\mbox{span}}\{P_{\msc W}M_{\phi}f_i:\phi \in \msc D, i=1,2,\dots,r\}=\msc W$, and by \cite[Corrollary 5.14]{BownikIversion2021} there exists a  dual Riesz basis for $\ol{\mbox{span}}\{P_{\msc W}M_{\phi}f_i:\phi \in \msc D, i=1,2,\dots,r\}$ of the multiplication generated form, i.e.,  $\{M_{\phi}f_i':\phi\in \msc D, i=1,2,\dots,r\}$ in $\msc W$. Thus we have 
$$\langle M_{\phi}f_j, M_{\phi'}f_i'\rangle =\langle M_{\phi}f_j,P_{\msc W} M_{\phi'}f_i' \rangle=\langle P_{\msc W} M_{\phi}f_j, M_{\phi'}f_i' \rangle=\delta_{\phi, \phi'}\delta_{i,j} \  \phi, \phi'\in \msc D, \mbox{and} \  i, j \in \{1,2,\dots,r\}.$$
Thus the result follows. 

(ii) {\bf Local  Setup:}  For a.e. $x\in X$,  let $\{f_i(x)\}_{i=1}^r$ and $\{f_i'(x)\}_{i=1}^r$ be  Riesz bases for $J_{\msc A}(x)$ and $J_{\msc A'}(x)$, respectively and they are biorthogonal. We now show this  is equivalent to 
\begin{align}\label{DSome-J}
\ J_{\msc A'}(x)\oplus J_{\msc A}(x)^\perp=\mc H \  \mbox{and} \ 	J_{\msc A}(x)\oplus J_{\msc A'}(x)^\perp=\mc H, \ \mbox{for a.e.} \ x\in X.
\end{align}
Since $\{f_i(x)\}_{i=1}^r$ and $\{f_i'(x)\}_{i=1}^r$ are Riesz basis, then $J_{\msc A}(x)=\{u\in \mc H: u=\sum_{i=1}^r c_i f_i(x)\}$ and $J_{\msc A'}(x)=\{v\in \mc H: v=\sum_{i=1}^r c_i f_i'(x)\}$.  Let $h\in J_{\msc A}(x)\bigcap J_{\msc A'}(x)^\perp$ then 
$h=\sum_{i=1}^r \langle h, f_i(x)\rangle f_i'(x)=0$ hence $J_{\msc A'}(x)\bigcap J_{\msc A}(x)^\perp=\{0\}$. 
 Let $w\in \mc H$, then $Pw:=\sum_{i=1}^r \langle w, f_i(x)\rangle f_i'(x)\in J_{\msc A'}(x)$.   By the biorthogonal property of  $\{f_i(x)\}_{i=1}^r$ and $\{f_i'(x)\}_{i=1}^r$, we have $\langle w-Pw, f_i(x) \rangle=0$ for all $i=1,\dots,r$ i.e., $w-Pw\in J_{\msc A}(x)^\perp$.
So $w=Pw+ (w-Pw)\in J_{\msc A'}(x)+ J_{\msc A}(x)^\perp $ which implies $J_{\msc A'}(x)+J_{\msc A}(x)^\perp=\mc H$. Combining, we have $	J_{\msc A'}(x)\oplus J_{\msc A}(x)^\perp=\mc H $. In a similar way, interchanging the roll of $J_{\msc A}(x)$ and $J_{\msc A'}(x)$ other side of (\ref{DSome-J}) i.e., 	$J_{\msc A}(x)\oplus J_{\msc A'}(x)^\perp=\mc H, \ \mbox{for a.e.} \ x\in X$, can be shown.

Since  $J_{\msc A}(x)+ J_{\msc A'}(x)^\perp$ is closed and $J_{\msc A}(x)\bigcap J_{\msc A'}(x)^\perp=\{0\}$, by the  \cite[Theorem 2.1]{Tang2000obliqueprojection} the supremum cosine angle
$$S(J_{\msc A}(x), J_{\msc A'}(x)^\perp):=\sup \{|\langle v, w\rangle|:v\in J_{\msc A}(x), w\in J_{\msc A'}(x)^\perp, \|v\|=\|w\|=1\}<1.$$
Hence $R(J_{\msc A}(x), J_{\msc A'}(x) )=\sqrt{1-S(J_{\msc A}(x), J_{\msc A'}(x)^\perp)^2}>0$. Interchanging the roll of $J_{\msc A}(x)$ and $J_{\msc A'}(x)$ in the above argument $R(J_{\msc A'}(x), J_{\msc A}(x))>0$.

 Converse part,
Let $R(J_{\msc A}(x), J_{\msc A'}(x))>0$. Then $S(J_{\msc A}(x), J_{\msc A'}(x)^\perp)<1$. Using \cite[Theorem 2.1]{Tang2000obliqueprojection}, we have $J_{\msc A}(x)+J_{\msc A'}(x)^\perp$ is closed and $J_{\msc A}(x)\bigcap J_{\msc A'}(x)^\perp=\{0\}$. In a similar way when $R(J_{\msc A'}(x), J_{\msc A(x)})>0$, then we can show $J_{\msc A'}(x)+J_{\msc A}(x)^\perp$ is closed and $J_{\msc A'}(x)\bigcap J_{\msc A}(x)^\perp=\{0\}$.
 Hence 
$$J_{\msc A}(x)+J_{\msc A'}(x)^\perp=(J_{\msc A}(x)+J_{\msc A'}(x)^\perp)^{\perp \perp}=\left(J_{\msc A}(x)^\perp\bigcap J_{\msc A'}(x)\right)^\perp=\mc H.$$ So
$\mc H=J_{\msc A}(x) \oplus J_{\msc A'}(x)^\perp$.  In a similar way, $\mc H=J_{\msc A'}(x) \oplus J_{\msc A}(x)^\perp$. 

Assume $J_{\msc A'}(x)\bigoplus J_{\msc A}(x)^\perp=\mc H$. Let for a.e. $x\in X$, $\{f_i(x)\}_{i=1}^r$ and $\{h_i(x)\}_{i=1}^r$  be the dual Riesz bases for $J_{\msc A'}(x)$, i.e., 
$\langle f_i(x), h_j(x)\rangle=\delta_{i,j}$. Let $S: J_{\msc A}(x)\ra J_{\msc A}(x)$ be the frame operator, then consider $$g_i(x):={S^{-1}f_i}(x), 1\leq i\leq r.$$ 
Now the map, $P_{J_{\msc A(x)}}: \mc  H\ra \mc H$ by $P_{J_{\msc A}(x)}f:=\sum_{i=1}^r \langle f, f_i(x) \rangle g_i(x)$ is
the orthogonal projection of $\mc H$ on $J_{\msc A}(x)$.
Consider $$\msc P:=P_{J_{\msc A}(x)}|_{J_{\msc A'}(x)}.$$  If $f\in J_{\msc A'}(x)$
 then $\msc P(f)=0$ then $f\in J_{\msc A'}(x)\bigcap J_{\msc A}(x)^\perp=\{0\}$ so $\msc P$ is injective and $\msc P(J_{\msc A'}(x))=P_{J_{\msc A}(x)}(J_{\msc A'}(x))=P_{J_{\msc A}(x)}(J_{\msc A'}(x)+J_{\msc A}(x)^\perp)=P_{J_{\msc A}(x)}(\mc H)=J_{\msc A}(x)$. Hence $\msc P$ is  bounded invertible operator. Define $f_i'(x):=\msc P^{-1}(f_i(x))$, $1\leq i\leq r$. Then $\{f_i'(x)\}_{i=1}^r$ is  the required Riesz basis for $J_{\msc A'}(x)$,  satisfying the biorthogonality condition.

\end{proof}

\section{Application to locally compact group}\label{Application}
Let $\mathscr G$ be a second countable locally compact group which is not necessarily abelian, and  $\Gamma$ be a closed abelian subgroup of $\mathscr G$. A closed subspace  $V$ in $L^2(\mathscr G)$ is said to be   \textit{$\Gamma$-translation invariant ($\Gamma$-TI)} if $L_\xi f \in  V$  for all $f\in  V$ and $\xi \in \Gamma$, where for $\eta \in \mathscr G$ the \textit{left translation} $L_\eta$ on $L^2 (\mathscr G)$ is defined by    
	$$(L_\eta f)(\gamma) = f(\eta^{-1} \gamma), \quad \gamma \in \mathscr G \  \mbox{and }\ f\in L^2(\mathscr G).$$
	Translation invariant spaces are widely used in various domains, significant among them are harmonic analysis, signal processing, and time-frequency analysis.
	Researchers are  often interested in characterizing the class of  generators of TI spaces  that allows for reconstruction of any function/signal/image \textit{via} a reproducing formula. 
	For a family of functions   $\mc A\ss L^2(\mathscr G)$, 
	let us consider a \textit{$\Gamma$-translation generated ($\Gamma$-TG)} system $\mc E^{\Gamma}(\mc A)$ and   its associated  $\Gamma$-translation invariant ($\Gamma$-TI)  space $\mc S^{\Gamma}(\mc A)$  generated by $\mc A$, i.e., 
	$$
	\mc E^{\Gamma}(\mc A) :=\{L_\xi \varphi : \varphi \in \mc A, \xi\in \Gamma\}  \quad \mbox{and} \quad  \mc S^{\Gamma}(\mc A) := \overline{\Span} \ \mc E^{\Gamma}(\mc A),
	$$ 
	respectively.

For  $x \in  \mathscr G$, a right coset of $\Gamma$ in $\mathscr G$ with respect to $x$ is denoted by $\Gamma x$, and for  a function   $f :\mathscr G \rightarrow \mathbb C$, we define a  complex valued    function $f^{\Gamma x}$ on $\Gamma$ by  
$
f^{ \Gamma x}(\gamma)=f(\gamma \, \Xi(\Gamma x)), \quad     \gamma \in  \Gamma, 
$
where   the   space of orbits        $\Gamma\backslash \mathscr G = \{\Gamma x: x \in  \mathscr G\}$ is    the set of all right cosets of $\Gamma $ in $\mathscr G$, and $\Xi : \Gamma \backslash \mathscr G\ra  \mathscr G$ is a \textit{Borel section} for  the quotient space $\Gamma \backslash \mathscr G$. Then the Fourier transform of $f^{ \Gamma x} \in L^1 (\Gamma)$  is given by $\widehat{f^{ \Gamma x}} (\alpha)=\int_{\Gamma} f^{ \Gamma x} (\gamma) \alpha (\gamma^{-1}) d\mu_\Gamma (\gamma),$ for $\alpha \in \widehat{\Gamma},$ 
which can be  extended to $L^2 (\Gamma)$.  The \textit{Zak transformation} $\mathcal Z$ of  $f\in L^2(\mathscr G)$ for the pair $(\mathscr G,\Gamma)$  is   defined by
\begin{equation}\label{zak trans}
	(\mc Zf)(\alpha)(\Gamma x) =\widehat{f^{\Gamma x}}(\alpha), \quad a.e. \quad  \alpha \in \widehat{\Gamma} \ \mbox{and} \   \Gamma x\in \Gamma\backslash \mathscr G,
\end{equation}
which is a    unitary linear transformation from $ L^2(\mathscr G)  $ to $L^2(\widehat{\Gamma}; L^2(\Gamma\backslash \mathscr G))$ \cite{ iverson2015subspaces}.  
Note that the Zak transform $\mc Z$ is closely associated to fiberization map $\mathscr F$ when $\mathscr G$ becomes   abelian. For a second countable LCA group $\mc G$ and its closed subgroup $\Lambda$,  the \textit{fiberization}   $\mathscr F$ is a unitary map from $L^2 (\mc G)$ to $L^2 ( \widehat{\mc G}/ \Lambda^\perp ; L^2 (\Lambda^\perp))$ given by 
$  (\mathscr F f)(\beta\Lambda^\perp )(x)=\widehat{f} (x \, \zeta (\beta \Lambda^\perp)),$ $x\in \Lambda^\perp, \beta\in \widehat{\mc G}, $
for $f \in L^2 (\mc G)$, where $\Lambda^\perp:=\{\beta\in \widehat{\mc G}: \beta(\lambda)=1, \ \forall \ \lambda \in \Lambda \}$, $\Lambda^\perp\backslash \widehat{\mc G}=\widehat{\mc G}/ \Lambda^\perp$ and  $\zeta : \widehat{\mc G}/ \Lambda^\perp \rightarrow \widehat{\mc G}$ is Borel section which   maps compact sets to pre-compact sets. 	 
 The  Zak transform and fiberization map  on the Euclidean space $\mathbb R^n$  by the action of integers $\mathbb Z^n$ are 
 $$
({\mc Z} f) (\xi, \eta)=\sum_{k \in \mathbb Z^n} f(\xi+k) e^{-2\pi i k\eta}, \ \mbox{and}  \  (\mathscr Ff)(\xi)(k)=\widehat {f}(\xi+k),  
 $$
 for $k \in \mathbb Z^n$, $\xi, \eta \in \mathbb  T^n$ and $f \in L^1 (\mathbb R^n) \bigcap L^2 (\mathbb R^n)$.

Observe that    the Zak transform $\mc Z$   satisfies the intertwining property with the left translation and multiplication operators,  i.e., for $f \in  L^2 (\mathscr G)$, 
$	(\mc ZL_{\gamma}f)(\alpha) =  (M_{\phi_\gamma}  \mc Zf) (\alpha), \quad 
	\mbox{for    a.e.} \ \alpha \in \widehat{\Gamma}\  \mbox{and} \ \gamma \in \Gamma, $ 
where  $M_{\phi_\gamma}$ is the multiplication operator on $L^2(\widehat{\Gamma}; L^2(\Gamma\backslash \mathscr G))$, $\phi_\gamma (\alpha)=\overline{\alpha(\gamma)}$   and $\phi_\gamma \in L^\infty (\widehat{\Gamma})$ for each $\gamma \in \Gamma$. Therefore, our goal can be established by converting the problem of $\Gamma$-TI space $\mc S^{\Gamma}(\msc A)$ into the MI spaces on $L^2 (X; \mc H)$ with the help of Zak transform, where $X=\widehat{\Gamma}$ and $\mc H=L^2(\Gamma\backslash \mathscr G)$.


The following result is a generalization of \cite[Theorem  ]{kim2005infimum} for the locally compact group.
\begin{thm}
Let $\msc G$ be a locally compact group having a discrete abelian subgroup $\Gamma$, then 
	for the finite collection of functions ${\mc A}=\{f_i\}_{i=1}^m$ and ${\mc B}=\{g_i\}_{i=1}^n$ in $L^2(\msc G)$,  and for a.e. $\alpha \in \widehat \Gamma$, assume the range functions  $J_{\mc A}(\alpha)=\Span\{\mc Zf_i(\alpha): i=1,2, \dots,m \}$ and $J_{\mc B}(\alpha)=\Span\{\mc Zg_i(\alpha): i=1,2, \dots,n \}$ associated to $\Gamma$-TI spaces $\mc S^{\Gamma}(\mc A)$ and $\mc S^{\Gamma}(\mc B)$, respectively. Then the following are equivalent:
	\begin{enumerate}
		\item[(i)]  	 There exists ${\mc A'}=\{f_i'\}_{i=1}^r$ and ${\mc B'}=\{g_i'\}_{i=1}^r$ in $L^2(\msc G)$ such that  $\mc E^{\Gamma}(\mc A')$ and  $\mc E^{\Gamma}(\mc B')$ are continuous frames for $\mc S^{\Gamma}(\mc A)$  and $\mc S^{\Gamma}(\mc B)$, respectively, satisfying 
		the  following reproducing formulas for  $g\in \mc S^{\Gamma}(\mc A)$ and $h \in \mc S^{\Gamma}(\mc B)$:
		\begin{align*}
			g=\sum_{i=1}^r \int_{\Gamma} \langle g, L_\gamma g_i'\rangle L_\gamma f_i' \dgamma\  ,\,  \mbox{and}\   h=\sum_{i=1}^r  \int_{\Gamma}\langle h, L_{\gamma}f_i'\rangle L_{\gamma} g_i'  \ \dgamma.
		\end{align*}
		\item[(ii)] The infimum cosine angles  of  $\mc S^{\Gamma}(\mc A)$  and $\mc S^{\Gamma}(\mc B)$  are greater than zero, i.e.,  $$
		R(\mc S^{\Gamma}(\mc A) ,\mc S^{\Gamma}(\mc B) )>0 \ 
		\mbox{and} \  R(\mc S^{\Gamma}(\mc B) ,\mc S^{\Gamma}(\mc A) )>0.$$
		\item[(iii)]  There exists collection of functions  $\{f_i'\}_{i=1}^r$ and $\{g_i'\}_{i=1}^r$ in $L^2(\msc G)$ such that for a.e. $\alpha \in \widehat \Gamma$, the systems $\{\mc Zf_i'(\alpha)\}_{i=1}^r$ and  $\{\mc Z g_i'(\alpha)\}_{i=1}^r$ are finite frames for $J_{\msc A}(\alpha)$  and $J_{\msc B}(\alpha)$, respectively, satisfying 
		the  following reproducing formulas  for   $u\in J_{\mc A}(\alpha)$ and   $v\in J_{\mc B}(\alpha)$:  
		\begin{align*}
			u=\sum_{i=1}^r \langle u,\mc Z g_i'(\alpha)\rangle \mc Zf_i'(\alpha), \ \mbox{and} \ v=\sum_{i=1}^r \langle v, \mc Zf_i'(\alpha)\rangle \mc Zg_i'(\alpha), \  a.e. \ \alpha \in \widehat \Gamma.
		\end{align*}
		
		\item[(iv)]  For a.e. $\alpha \in \widehat \Gamma$, the infimum cosine angles of  $J_{\mc A}(\alpha)$ and 
		$J_{\mc B}(\alpha)$  are greater than zero, i.e.,
		$$R(J_{\mc A}(\alpha),J_{\mc B}(\alpha))>0 \ 
		\mbox{and} \  R(J_{\mc B}(\alpha),J_{\mc A}(\alpha))>0.$$
	\end{enumerate}
\end{thm}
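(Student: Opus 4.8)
The plan is to reduce the statement to Theorem~\ref{Result-I} by transporting the entire $\Gamma$-translation picture to the multiplication-invariant picture on $L^2(X;\mc H)$ through the Zak transform $\mc Z$. Concretely, I would set $X=\widehat{\Gamma}$, $\mc H=L^2(\Gamma\backslash\msc G)$, index set $\mc M=\Gamma$ equipped with its Haar (counting) measure $\mu_\Gamma$, and $\mc D=\{\phi_\gamma:\gamma\in\Gamma\}$ where $\phi_\gamma(\alpha)=\overline{\alpha(\gamma)}$. Since $\Gamma$ is discrete, its dual $\widehat{\Gamma}$ is compact, so $\mu_{\widehat\Gamma}(\widehat\Gamma)<\infty$, matching the hypothesis $\mu(X)<\infty$ of Theorem~\ref{Result-I}.

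The first genuine step is to verify that $\mc D$ is a Parseval determining set for $L^1(\widehat\Gamma)$ in the sense of (\ref{Parsevaldeter}). By Pontryagin duality each $\gamma\in\Gamma$ defines a character $\alpha\mapsto\alpha(\gamma)$ on the compact group $\widehat\Gamma$, and these characters form an orthonormal basis of $L^2(\widehat\Gamma)$; hence their conjugates $\{\phi_\gamma\}$ do as well, which is precisely Parseval's identity (\ref{Parsevaldeter}) for the system $\mc D$. This identifies (\ref{FT}) with the Fourier transform on $\widehat\Gamma$ and exhibits $\mc Z$ as its $\mc H$-valued counterpart.

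Next I would use the intertwining relation $\mc Z L_\gamma f = M_{\phi_\gamma}\mc Z f$ recorded above to translate all the objects. Because $\mc Z$ is unitary from $L^2(\msc G)$ onto $L^2(\widehat\Gamma;\mc H)$, it carries the $\Gamma$-TG system $\mc E^{\Gamma}(\mc A)$ onto the MG system $\mc E_{\mc D}(\mc Z\mc A)$ with $\mc Z\mc A=\{\mc Z f_i\}$, the $\Gamma$-TI space $\mc S^{\Gamma}(\mc A)$ onto the MI space $\mc S_{\mc D}(\mc Z\mc A)$, and the range functions $\alpha\mapsto\Span\{\mc Z f_i(\alpha)\}$ onto $J_{\mc Z\mc A}$. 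Unitarity preserves inner products $\langle g,L_\gamma g_i'\rangle=\langle\mc Z g,M_{\phi_\gamma}\mc Z g_i'\rangle$, frame and Bessel bounds, and orthogonal projections, so each of (i)--(iv) for the data $(\msc G,\Gamma,\mc A,\mc B)$ is equivalent to the corresponding assertion of Theorem~\ref{Result-I} for $(\mc Z\mc A,\mc Z\mc B)$ on $L^2(\widehat\Gamma;\mc H)$; in particular the reproducing formulas transfer verbatim once the integral $\int_\Gamma\cdots\,\dgamma$ is read as $\int_{\mc M}\cdots\,\ds$. Invoking Theorem~\ref{Result-I} then closes the argument.

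The main obstacle I anticipate is not the transport itself but checking the Parseval determining set property cleanly, together with the measurability hypotheses implicit in (\ref{Parsevaldeter}) and in the definition of a continuous frame: one must confirm that the discreteness of $\Gamma$ (hence compactness of $\widehat\Gamma$) delivers both $\mu(\widehat\Gamma)<\infty$ and the orthonormality of $\{\phi_\gamma\}$, and that the Borel section $\Xi$ used to define $\mc Z$ introduces no measurability pathology. Once these structural verifications are in place, the four equivalences follow as a direct application of Theorem~\ref{Result-I}.
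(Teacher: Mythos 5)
Your proposal is correct and follows essentially the same route as the paper: the paper's proof likewise transports the whole statement through the unitary Zak transform (which intertwines $L_\gamma$ with $M_{\phi_\gamma}$ and preserves the infimum cosine angles) and then invokes Theorem~\ref{Result-I}. Your additional verifications --- that $\{\phi_\gamma\}$ is a Parseval determining set via orthonormality of characters on the compact group $\widehat{\Gamma}$ and that $\mu(\widehat{\Gamma})<\infty$ --- are exactly the details the paper leaves implicit.
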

\begin{proof}
Since $\mc Z$ is an unitary operator,  	$R(\mc S^{\Gamma}(\mc A) ,\mc S^{\Gamma}(\mc A'))=R(\mc Z\mc S^{\Gamma}(\mc A) ,\mc Z\mc S^{\Gamma}(\mc A'))$ using Definition \ref{D-InfCosine}. Hence we have the desired result   using Theorem \ref{Result-I}.
\end{proof}

Next  we state the following result which is a generalization  to the locally compact group in case of Riesz basis \cite[Proposition 2.13]{bownik2004biorthogonal}.

\begin{thm}
	
Let $\msc G$ be a locally compact group having a discrete abelian subgroup $\Gamma$ and $\msc V$, $\msc W$ be $\Gamma$-TI subspaces of $L^2(\mathscr G)$. For the finite collection of functions ${\msc A}=\{f_i\}_{i=1}^r$,  assume $\mc E^{\Gamma} (\mc A)$ is a Riesz basis for $\msc V$.
	Then the following holds:
	\begin{enumerate}
		\item[(i)] \textbf{Global setup:} If  there exists  ${\mc A'}=\{f'_i\}_{i=1}^r$  in $L^2(\msc G)$ such that $\mc E^\Gamma(\mc A')$ is a Riesz basis for $\msc W$ satisfying the  biorthogonality  condition 
	$ \langle L_{\gamma} f_i, L_{\gamma'}f'_{i'} \rangle=\delta_{i, i'}\delta_{\gamma, \gamma'}, \quad i, i'=1, 2, \cdots, r; \  \gamma, \gamma'\in \Gamma, 
		$
		then 				the infimum cosine angles  of $\msc V$ and $\msc W$ are greater than zero, i.e.,  
		\begin{align}\label{Infimum1}
			R(\msc V,\msc W)>0 \ 
			\mbox{and} \  R(\msc W, \msc V)>0. 
		\end{align}
		Conversely	if  (\ref{Infimum1})  holds true, then  there exists   ${\mc A'}=\{f'_i\}_{i=1}^r$  in $L^2(\msc G)$ such that $\mc E^\Gamma(\mc A')$ is a Riesz basis for $\msc W$ satisfying the  biorthogonality  condition.  
		Moreover, the following reproducing formulas hold: 
		$$f=\sum_{\gamma \in \Gamma} \sum_{i=1}^r\langle f, L_{\gamma}f_i'\rangle L_{\gamma'}f_i,  \  \forall f\in \msc V,  \   \mbox{and} \  g=\sum_{\gamma \in \Gamma} \sum_{i=1}^r \langle g, L_{\gamma}f_i\rangle L_{\gamma}f_i',  \  \forall g\in \msc W.$$
		
		\item[(ii)]  \textbf{Local setup:} If  there exists  ${\mc A'}=\{f'_i\}_{i=1}^r$  in $L^2(\msc G)$  such that for a.e. $\alpha \in \widehat \Gamma$, $\{\mc Zf'_i(\alpha)\}_{i=1}^r$  is a  Riesz sequence in $L^2(\Gamma \backslash \msc G)$   satisfying the  following biorthogonality  condition 
		\begin{align}\label{Localbiorthogonal1}
			\langle   \mc Zf_i(\alpha),  \mc Zf'_{i'}(\alpha) \rangle=\delta_{i, i'}, \quad i, i'=1, 2, \cdots, r,    \ a.e. \  \alpha \in \widehat \Gamma, 
		\end{align}
		the infimum cosine angles  of $J_{\mc A}(\alpha)=\Span \{\mc Zf_i(\alpha)\}_{i=1}^r$ and $J_{\mc A'}(\alpha)=\Span \{\mc Zf_i'(\alpha)\}_{i=1}^r$ are greater than zero, i.e.,  
		\begin{align}\label{LocalInfimum1}
			R(J_{\mc A}(\alpha), J_{\mc A'}(\alpha))>0 \ 
			\mbox{and} \  R(J_{\mc A'}(\alpha), J_{\mc A}(\alpha))>0, \ a.e. \alpha \in \widehat \Gamma. 
		\end{align}
		Conversely if (\ref{LocalInfimum1}) holds,      there exists   ${\mc A'}=\{f'_i\}_{i=1}^r$  in $L^2(\msc G)$ such that  for  a.e. $\alpha \in \widehat \Gamma$,  $\{\mc Zf_i'(\alpha)\}_{i=1}^r$ is a Riesz sequence in  $L^2(\Gamma\backslash \msc G)$ satisfying the  biorthogonality  condition (\ref{Localbiorthogonal1}).  Moreover,  the following reproducing formulas hold for $u\in J_{\mc A}(\alpha)$,   and $v\in J_{\mc A'}(\alpha)$:  
		\begin{align*} 
			u=\sum_{i=1}^r \langle u, \mc Zf_i'(\alpha)\rangle \mc Zf_i(\alpha), \ \mbox{and} \ v=\sum_{i=1}^r \langle v, \mc Zf_i(\alpha)\rangle \mc Zf_i'(\alpha), \ \mbox{for a.e.} \  \alpha \in \widehat \Gamma.
		\end{align*}
	\end{enumerate}
	\end{thm}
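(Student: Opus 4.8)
The plan is to reduce everything to Theorem \ref{Result-II} by transporting the $\Gamma$-translation picture on $L^2(\mathscr G)$ to the multiplication-invariant picture on $L^2(\widehat{\Gamma}; L^2(\Gamma\backslash\mathscr G))$ through the unitary Zak transform $\mc Z$. Concretely, I set $X=\widehat{\Gamma}$ and $\mc H=L^2(\Gamma\backslash\mathscr G)$, and I take the determining set $\mc D=\{\phi_\gamma:\gamma\in\Gamma\}$ with $\phi_\gamma(\alpha)=\overline{\alpha(\gamma)}$. The intertwining relation $\mc Z L_\gamma=M_{\phi_\gamma}\mc Z$ recorded above turns the $\Gamma$-TG system $\mc E^{\Gamma}(\mc A)=\{L_\gamma f_i\}$ into the MG system $\mc E_{\mc D}(\mc Z\msc A)=\{M_{\phi_\gamma}\mc Z f_i\}$, where $\mc Z\msc A:=\{\mc Z f_i\}_{i=1}^r$, and it carries the $\Gamma$-TI spaces $\msc V,\msc W$ to the MI spaces $\mc Z\msc V,\mc Z\msc W$. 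The range functions are unchanged: $J_{\mc A}(\alpha)=\Span\{\mc Z f_i(\alpha)\}=J_{\mc Z\msc A}(\alpha)$.

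Before invoking Theorem \ref{Result-II} I would check that its standing hypotheses hold in this transported setting. Since $\Gamma$ is discrete, its dual $\widehat{\Gamma}$ is compact, so with the normalized Haar measure $\mu_{\widehat{\Gamma}}(\widehat{\Gamma})<\infty$, giving the required $\mu(X)<\infty$. By Pontryagin duality $\widehat{\widehat{\Gamma}}\cong\Gamma$, so the evaluation characters $\alpha\mapsto\alpha(\gamma)$, $\gamma\in\Gamma$, exhaust the characters of the compact group $\widehat{\Gamma}$ and hence form an orthonormal basis of $L^2(\widehat{\Gamma})$; therefore so does $\mc D=\{\phi_\gamma\}$, which is exactly the hypothesis that $\msc D$ be an orthonormal basis of $L^2(X)$ in Theorem \ref{Result-II}. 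Because $\mc Z$ is unitary it preserves every structural property needed: $\mc E^{\Gamma}(\mc A)$ is a Riesz basis for $\msc V$ iff $\mc E_{\mc D}(\mc Z\msc A)$ is a Riesz basis for $\mc Z\msc V$; the biorthogonality $\langle L_\gamma f_i, L_{\gamma'}f'_{i'}\rangle=\delta_{i,i'}\delta_{\gamma,\gamma'}$ is equivalent to $\langle M_{\phi_\gamma}\mc Z f_i, M_{\phi_{\gamma'}}\mc Z f'_{i'}\rangle=\delta_{i,i'}\delta_{\gamma,\gamma'}$, i.e.\ condition (\ref{biorthogonal}); and, as already used in the proof of the preceding theorem, $R(\msc V,\msc W)=R(\mc Z\msc V,\mc Z\msc W)$ together with the analogous pointwise identity, since $R$ in Definition \ref{D-InfCosine} is defined through norms of projections and is invariant under unitaries.

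With these identifications the global part (i) follows from Theorem \ref{Result-II}(i) applied to $\mc Z\msc A,\mc Z\msc A'$: the biorthogonal Riesz-basis hypothesis on $\msc W$ corresponds to (\ref{biorthogonal}), and (\ref{Infimum1}) corresponds to (\ref{Infimum}). To recover the reproducing formulas on $L^2(\mathscr G)$ I pull the formulas of Theorem \ref{Result-II}(i) back through $\mc Z^{-1}$, using again $\mc Z L_\gamma=M_{\phi_\gamma}\mc Z$ and the $\mc Z$-invariance of the inner product; since $\mc Z$ is a bijection, the resulting identities hold for every $f\in\msc V$ and $g\in\msc W$. The local part (ii) is even more direct: the fiberwise Riesz-sequence property of $\{\mc Z f'_i(\alpha)\}$, the pointwise biorthogonality (\ref{Localbiorthogonal1}), and the angle conditions (\ref{LocalInfimum1}) on $J_{\mc A}(\alpha),J_{\mc A'}(\alpha)$ are precisely the hypotheses and conclusions of Theorem \ref{Result-II}(ii) for the fibers of $\mc Z\msc A,\mc Z\msc A'$, and the pointwise reproducing formulas coincide.

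The only point requiring genuine verification, rather than formal transport, is that $\mc D=\{\phi_\gamma\}$ is an orthonormal basis of $L^2(\widehat{\Gamma})$, which is where the discreteness of $\Gamma$ is essential: discreteness is what makes $\widehat{\Gamma}$ compact (hence the measure finite) and what upgrades $\{\phi_\gamma\}$ from a mere Parseval determining set to a complete orthonormal system. Once this is in place, the remainder is a routine translation of statements across the unitary $\mc Z$, and no estimates beyond those already established for Theorem \ref{Result-II} are needed.
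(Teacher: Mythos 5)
Your proposal is correct and follows exactly the route the paper intends: the paper states this theorem without proof, relying on the same Zak-transform transport argument it gives for the preceding frame-version theorem, which is precisely your reduction to Theorem \ref{Result-II}. Your write-up is in fact more complete than the paper's, since you explicitly verify the standing hypotheses of Theorem \ref{Result-II} in the transported setting --- that discreteness of $\Gamma$ makes $\widehat{\Gamma}$ compact (so $\mu(X)<\infty$) and that the characters $\phi_\gamma$ form an orthonormal basis of $L^2(\widehat{\Gamma})$ rather than merely a Parseval determining set --- details the paper leaves unstated.
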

The similar results can be  deduced for locally compact abelian group $\mc G$ using fiberization $\msc F$.

\noindent{\bf Declarations}\\
{\bf Competing Interests} The authors have not disclosed any competing interests.\\
{\bf Data Availability} No data sets were generated during the study.
\providecommand{\bysame}{\leavevmode\hbox to3em{\hrulefill}\thinspace}
\providecommand{\MR}{\relax\ifhmode\unskip\space\fi MR }
\providecommand{\MRhref}[2]{%
	\href{http://www.ams.org/mathscinet-getitem?mr=#1}{#2}
}
\providecommand{\href}[2]{#2}

%
\end{document}